\newtheorem{theorem}{Theorem}[section]
\newtheorem{lemma}[theorem]{Lemma}
\newtheorem{corollary}[theorem]{Corollary}
\newtheorem{remark}[theorem]{Remark}
\newtheorem{definition}[theorem]{Definition}
\numberwithin{equation}{section}
\newcommand{\Z}{\ensuremath{\mathbb{Z}}}
\newcommand{\R}{\ensuremath{\mathbb{R}}}
\renewcommand{\H}{\mathscr{H}} 
\newcommand{\D}{\ensuremath{\mathbb{D}}}
\newcommand{\dx}{\ensuremath{\, \mathrm{d} x}}
\newcommand{\dX}{\ensuremath{\, \mathrm{d} X}}
\newcommand{\dY}{\ensuremath{\, \mathrm{d} Y}}
\newcommand{\dZ}{\ensuremath{\, \mathrm{d} Z}}
\newcommand{\dt}{\ensuremath{\, \mathrm{d} t}}
\newcommand{\ds}{\ensuremath{\, \mathrm{d} s}}
\newcommand{\dw}{\ensuremath{\, \mathrm{d} \omega}}
\newcommand{\dsig}{\ensuremath{\, \mathrm{d} \sigma}}
\newcommand{\dtau}{\ensuremath{\, \mathrm{d} \tau}}
\newcommand{\dH}{\ensuremath{\, \mathrm{d} \H}}
\newlist{steps}{enumerate}{2}
\setlist[steps]{wide} 
\setlist[steps,1]{label={\textbf{Step \arabic*:}}} 
\setlist[steps,2]{label={\textbf{Step \arabic{stepsi}.\alph*:}}} 
\renewcommand{\epsilon}{\varepsilon}
\newcommand{\st}{\ensuremath{:}}
\newcommand{\loc}{\mathrm{loc}}
\DeclareMathOperator*{\supp}{supp}
\DeclareMathOperator{\BMO}{BMO}
\DeclareMathOperator{\Lip}{Lip}
\DeclareMathOperator{\dist}{dist}
\DeclareMathOperator{\diam}{diam}
\DeclareMathOperator*{\di}{div}
\crefname{corollary}{Corollary}{Corollaries}
\crefname{theorem}{Theorem}{Theorems}
\crefname{lemma}{Lemma}{Lemmas}
\crefname{proposition}{Proposition}{Propositions}
\crefname{equation}{}{} 
\begin{document}
\title{Parabolic Regularity and Dirichlet boundary value problems}

\author{Martin Dindo\v{s}}
\address{School of Mathematics \\
	The University of Edinburgh and Maxwell Institute of Mathematical Sciences, UK}
\email{m.dindos@ed.ac.uk}

\author{Luke Dyer}
\address{School of Mathematics \\
	The University of Edinburgh and Maxwell Institute of Mathematical Sciences, UK}
\email{l.dyer@sms.ed.ac.uk}
\date{}
\begin{abstract}
	We study the relationship between the Regularity and Dirichlet boundary value problems for parabolic equations of the form $Lu=\text{div}(A \nabla u)-u_t=0$ in $\Lip(1,1/2)$ time-varying cylinders, where the coefficient matrix $A = \left[ a_{ij}(X,t)\right] $ is uniformly elliptic and bounded.

	We show that if the Regularity problem $(R)_p$ for the equation $Lu=0$ is solvable for some $1<p<\infty$ then the Dirichlet problem $(D^*)_{p'}$ for the adjoint equation $L^*v=0$ is also solvable, where $p'=p/(p-1)$.
	This result is an analogue of the result established in the elliptic case by Kenig and Pipher \cite{KP93}.
	In the parabolic settings in the special case of the heat equation in slightly smoother domains this has been established by Hofmann and Lewis \cite{HL96} and Nystr\"om \cite{Nys06} for scalar parabolic systems. In comparison, our result is abstract with no assumption on the coefficients beyond the ellipticity condition and is valid in more general class of domains.
\end{abstract}
\maketitle

\section{Introduction}
We are interested in the relationship between the solvability of the Regularity and the Dirichlet boundary value problems for parabolic operators
\[
	L= \di(A \nabla \cdot)-\partial_t
\]
on $\Lip(1,1/2)$ cylinders $\Omega$.
These domains are bounded and Lipschitz in spatial variables, unbounded and $\Lip_{1/2}$ in time.
Furthermore, we assume that the matrix $A(X,t)$ satisfies an ellipticity condition, and its coefficients are bounded and measurable.

	The question of solvability of various boundary value problems for parabolic PDEs on time-varying domains has long history. Recall, that in the elliptic settings \cite{Dah77} has shown that, in a Lipschitz domain, the harmonic measure and surface measure are mutually absolutely continuous, and that the elliptic Dirichlet problem is solvable with data in $L^2$ with respect to surface measure.
	R. Hunt then asked whether Dalhberg's result held for the heat equation in	domains whose boundaries are given locally as functions $\phi(x,t)$, Lipschitz in the spatial variable.
	It was conjectured (due to the natural parabolic scaling) that the correct regularity of $\phi(x,t) $ in the time variable $t$ should be a H\"older condition of order $1/2$ in $t$ (denoted $\Lip_{1/2}$ in $t$). It turns out that under this assumption the parabolic measure associated with the equation \eqref{E:pde} is doubling \cite{Nys97}.
	
	This is the class of domains we work on. It is worth pointing out however that in order to answer R. Hunt's question positively one has to consider more regular domains. This follows from the counterexample of \cite{KW88} where it was shown that under just the $\Lip(1,1/2)$ condition on the domain $\Omega$ the associated caloric measure (that is the measure associated with the operator $\partial_t-\Delta$) might not be mutually absolutely continuous with the natural surface measure.
	The issue was resolved in \cite{LM95} where it was established that mutual absolute continuity of caloric measure and a certain parabolic analogue of surface measure holds when $\phi$ has $1/2$ of a time derivative in the parabolic $\BMO (\R^n)$ space, which is a slightly stronger condition than $\Lip_{1/2}$.
	\cite{HL96} subsequently showed that this condition was sharp.
	In particular in this paper the authors has solved the $L^2$ Dirichlet problem for the heat equation in graph domains of Lewis-Murray type. A related  class of localised domains in which parabolic boundary value problems are solvable was considered in \cite{Riv14} as well as in \cites{DH16, DPP16}. The paper \cite{DH16} has established $L^p$ solvability for parabolic Dirichlet problem under assumption that the coefficients satisfy certain natural small Carleson condition which also appears for elliptic PDEs. The second paper \cite{DPP16} finds sufficient and necessary condition for the parabolic measure to be $A_\infty$ with respect to the parabolic analogue of the surface measure.\vglue1mm
			
The study of the heat equation in non-smooth domains, or more generally of parabolic operators with non-smooth coefficients, has historically followed the development of the elliptic theory with some delay due to new challenges presented by the parabolic term.

	Our result is also motivated by a result proven in the elliptic setting by \cite{KP93} where, amongst other relationships, they show that $(R_p)$ implied $(D^*)_{p'}$ for elliptic operators $\di(A \nabla \cdot)$ in bounded Lipschitz domains. This has been observed previously for some specific parabolic PDEs (such as the heat equation and constant coefficient systems \citelist{\cite{HL96}*{p.~418} \cite{Nys06}} respectively). \cite{Nys06} also shows that no duality can be expected between Dirichlet and Neumann boundary value problems in non-smooth time-varying domains.
	
	In our result we remove any restrictions on the coefficients of the scalar elliptic operator (beyond the ellipticity hypothesis) and establish the result on the largest reasonable class of domains. It is worth pointing out that due to the roughness of the coefficients and of the boundary of these domains the usual techniques (such as layer potentials and Fourier methods) are not available.
			
	Our main result proves that if the Regularity problem $(R_p)$ for the operator $L$ on the domain $\Omega$ is solvable for some $1<p<\infty$ ($(R_p)$ has boundary data in a Sobolev space $L^p_{1,1/2}(\partial\Omega)$,  which is a space of functions with spatial derivatives and a half-time derivative in $L^p$) then 
	the Dirichlet problem $(D^*)_{p'}$ ($(D^*_{p'})$ has boundary data in $L^{p'}(\partial\Omega)$) for the adjoint operator 
	$$L^*= \di(A^* \nabla \cdot)+\partial_t$$ is also solvable on the domain $\Omega$.
	
	Observe that $L^*$ is a backward in time parabolic operator. This however does not causes any issues as by
	the change of variables of $v(X,t) = u(X,-t)$ and $\tilde{A}(X,-t) = A(X,t)$ we see that $L^*u=0$ on $\Omega$
	is equivalent to
	\begin{equation}
	\label{E:pde:adjoint:reflected}
	\tilde{L}v = \di (\tilde{A}^* \nabla v)-v_t=0 \qquad \text{on } \tilde{\Omega},
	\end{equation}
	where $\tilde{\Omega}$ is the reflection of $\Omega$ in the $t$ variable i.e.\ $\tilde{\Omega} = \{(X,-t) \st (X,t) \in \Omega\}$. Hence, the solvability of the $L^{p'}$ Dirichlet problem for the operator $L^*$ on $\Omega$
	is equivalent to the solvability of the $L^{p'}$ Dirichlet problem for the operator $\tilde{L}$ on $\tilde{\Omega}$. Here $\tilde{L}v=0$ is the usual forward in time parabolic PDE.	

\begin{theorem}
	\label{T:Rp}
	Let $\Omega$ be a $\Lip(1,1/2)$ cylinder, as in \cref{D:domain}, with character $(\ell, N,C_0)$.	Let $A(X,t)$ be bounded, measurable and elliptic, that is
	\begin{equation}
		\label{E:elliptic}
		\lambda |\xi|^2 \leq \sum_{i,j} a_{ij}(X,t) \xi_i \xi_j \leq \Lambda |\xi|^2
	\end{equation}
	for all $\xi \in \R^n$ and a.e. $X \in \R^n$, $t\in \R$.
	Let the Regularity problem $(R)_p$ be solvable for the equation
	\begin{equation}
		\label{E:pde}
		\begin{cases}
			u_t = \di (A \nabla u) & \text{in } \Omega \subset \R^{n+1}, \\
			\, u   = f             & \text{on } \partial \Omega,
		\end{cases}
	\end{equation}
	for some $1<p<\infty$. Then the Dirichlet problem $(D^*)_{p'}$ is solvable for the adjoint equation
	\begin{equation}
		\label{E:pde:adjoint}
		\begin{cases}
			-u_t = \di (A^* \nabla u) & \text{in } \Omega \subset \R^{n+1}, \\
			\quad\, u   = f           & \text{on } \partial \Omega,
		\end{cases}
	\end{equation}
	where $p'=p/(p-1)$.
\end{theorem}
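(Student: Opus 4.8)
The plan is to argue by duality, turning the hypothesis on $L$ into an $A_\infty$ statement for the adjoint parabolic measure and reading off the Dirichlet estimate from there. Write $\omega=\omega^{L}$, $\omega^{*}=\omega^{L^{*}}$ for the parabolic measures and $\sigma$ for the parabolic surface measure on $\partial\Omega$ (as in \cref{D:domain}). Using the time-reflection described above one may regard $(D^{*})_{p'}$ for $L^{*}$ as the $L^{p'}$ Dirichlet problem for the forward operator $\tilde L$ on $\tilde\Omega$, so that the doubling property of $\omega^{*}$ available on $\Lip(1,1/2)$ cylinders, the maximum principle, and the pointwise bound $N(v)\lesssim \mathcal M_{\omega^{*}}(v|_{\partial\Omega})$ (with $\mathcal M_{\omega^{*}}$ the appropriately normalised maximal operator) for $L^{*}$-solutions $v$ are all at our disposal. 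Consequently it suffices to prove that $k^{*}:=d\omega^{*}/d\sigma$ exists and satisfies a scale-invariant $L^{p}$ reverse H\"older inequality with respect to $\sigma$ — equivalently $\omega^{*}\in A_\infty(\sigma)$ — after which self-improvement of reverse H\"older classes gives $\|N(v)\|_{L^{p'}(\sigma)}\lesssim\|v|_{\partial\Omega}\|_{L^{p'}(\sigma)}$, and existence, uniqueness and a.e.\ nontangential convergence for $(D^{*})_{p'}$ follow from the standard $A_\infty$ consequences together with the comparison and boundary Harnack theory for these operators.

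The bridge from $(R)_{p}$ (a statement about $L$) to $\omega^{*}$ (a measure attached to $L^{*}$) is a Green/Rellich-type identity. For $u$ with $Lu=0$ and $v$ with $L^{*}v=0$ and admissible traces, integrating by parts and keeping track of the parabolic term — which produces the extra contribution $v\,\nu_{t}$ on the $L^{*}$ side, $\nu=(\nu_{X},\nu_{t})$ being the outer unit normal, the terms at $t=\pm\infty$ vanishing by the cylinder geometry — yields
\[
\int_{\partial\Omega}(A\nabla u\cdot\nu_{X})\,v\,d\sigma \;=\; \int_{\partial\Omega}u\,\bigl(A^{*}\nabla v\cdot\nu_{X}+v\,\nu_{t}\bigr)\,d\sigma .
\]
Specialising $u$ to the $L$-Green function $G_{L}(\cdot,A_\Delta)$ with pole at an appropriately placed corkscrew point $A_\Delta$ of a surface ball $\Delta=\Delta_{r}(Q_{0})$ — which vanishes on $\partial\Omega$, so the $v\,\nu_{t}$ term drops — identifies $k^{*}_{A_\Delta}=A\nabla_{X}G_{L}(\cdot,A_\Delta)\cdot\nu_{X}$ on $\partial\Omega$. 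In other words the density of the adjoint measure at the pole $A_\Delta$ is exactly the conormal derivative on $\partial\Omega$ of an $L$-solution that vanishes on the boundary; so the task becomes to bound $\|k^{*}_{A_\Delta}\|_{L^{p}(\Delta,\sigma)}$ by $\sigma(\Delta)^{1/p-1}\approx\sigma(\Delta)^{1/p}\fint_\Delta k^{*}_{A_\Delta}\,d\sigma$, which is the required reverse H\"older inequality since $\omega^{*}_{A_\Delta}(\Delta)\approx1$.

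Here $(R)_{p}$ enters. One cannot apply it directly to $G_{L}(\cdot,A_\Delta)$, which is only an $L$-solution away from its pole; the real step is to establish a localised, scale-invariant version of the $(R)_{p}$ estimate over the Carleson region $T(\Delta)$. Cutting $G_{L}(\cdot,A_\Delta)$ off near $A_\Delta$, the commutator/error terms that appear are supported in the interior and are controlled by interior Caccioppoli and De~Giorgi--Nash--Moser estimates together with the comparison bound $G_{L}(X,A_\Delta)\approx\delta(X)\,\omega_{A_\Delta}(\Delta_{\delta(X)}(\bar X))/\sigma(\Delta_{\delta(X)}(\bar X))$; the conclusion is $\|\tilde N_\Delta(\nabla G_{L}(\cdot,A_\Delta))\|_{L^{p}(\Delta,\sigma)}\lesssim\sigma(\Delta)^{1/p-1}$. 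Since $|k^{*}_{A_\Delta}|\lesssim\tilde N_\Delta(\nabla G_{L}(\cdot,A_\Delta))$ on $\Delta$, this is precisely the reverse H\"older bound needed, and combining the three steps proves the theorem.

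The main obstacle is this localisation, and inside it the genuinely parabolic difficulty: the nonlocal half-time derivative $\Dt$ built into the boundary space $L^{p}_{1,1/2}(\partial\Omega)$. Unlike the spatial gradient, $\Dt$ does not localise, so truncating in the time variable generates error terms that are not visibly lower order and must be estimated by hand; the same nonlocality has to be handled carefully wherever parabolic scaling is used — in the Green identity above, in the comparison estimates, and in passing between $\tilde N$ and $N$. A second, substantial, ingredient is the reduction in the first paragraph: proving, for merely bounded, measurable and elliptic $A$ and in these time-varying cylinders, that solvability of the backward adjoint Dirichlet problem is governed by the $A_\infty$ property of $\omega^{*}$ — this rests on the doubling, comparison and boundary Harnack machinery for the backward-in-time operator $L^{*}$, which is itself delicate.
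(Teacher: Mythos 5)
Your overall strategy --- reduce $(D^*)_{p'}$ to a reverse H\"older inequality for $k^*=\mathrm{d}\omega^*/\mathrm{d}\sigma$ --- matches the paper, but the bridge you build from $(R)_p$ to that inequality is different and, as written, has two genuine gaps. First, the Rellich/Green identity you start from is not available in this class of domains: on a $\Lip(1,1/2)$ cylinder the local graph functions $\phi_j$ are only $\Lip_{1/2}$ in $t$, so the time component $\nu_t$ of an ``outer unit normal'' need not exist even a.e., and the classical surface measure can fail to be locally finite --- which is exactly why the paper works with the substitute measure $\sigma=\int \H^{n-1}\,\mathrm{d}t$ of \cref{D:measure}. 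An identity of the form $\int_{\partial\Omega}(A\nabla u\cdot\nu_X)v\,\mathrm{d}\sigma=\int_{\partial\Omega}u(A^*\nabla v\cdot\nu_X+v\,\nu_t)\,\mathrm{d}\sigma$, and the ensuing identification $k^*=A\nabla G\cdot\nu_X$, also presuppose that $\nabla G(\cdot,A_\Delta)$ has a nontangential boundary trace; for merely bounded measurable $A$ this is not known a priori and is essentially what one is trying to prove. Second, and more seriously, the step you yourself flag as ``the real step'' --- a localised, scale-invariant $(R)_p$ estimate applicable to (a cutoff of) the Green function --- is asserted but not carried out. The hypothesis $(R)_p$ is a global statement about solutions with prescribed $L^p_{1,1/2}(\partial\Omega)$ data; the Green function has zero boundary data and is a solution only away from its pole, so the global estimate gives nothing directly, and the self-improvement to a local estimate for solutions vanishing on $2\Delta$ is precisely where the nonlocal half-time derivative bites. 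Naming the difficulty is not the same as resolving it, so the argument is incomplete at its central point.

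The paper avoids both obstacles by never touching the conormal derivative of $G$. It fixes $\Delta_r\subset\Delta_d$ and constructs an explicit datum $f$ vanishing on $\Delta_r$, equal to $1$ on $\Delta_{3r}\setminus\Delta_{2r}$ and supported in $\Delta_{4r}$, whose $L^p_{1,1/2}$ norm is computed by hand to be $\lesssim r^{(n+1)/p-1}$ (this is where \cref{T:Dn} and the interpolation bound for $D^t_{1/2}f$ enter, sidestepping any localisation of the nonlocal derivative). The \emph{global} hypothesis $(R)_p$ is then applied to the corresponding solution $u$. The Carleson estimate (\cref{L:carl}), Harnack chains and the maximum principle give $G(\cdot,V^-_d)\lesssim u(\cdot)\,G(V^-_r,V^-_d)$ on $T(\Delta_{5r/2})$, which combined with \cref{L:Green:comparison} yields $\omega^{*V^-_d}(\Delta_s(P))/\omega^{*V^-_d}(\Delta_r)\lesssim (s^n/r^n)\,u(V^-_s)$; Poincar\'e, Harnack and the parabolic Hardy--Littlewood maximal function then convert $u(V^-_s)$ into $s\,\bigl(M(\widetilde N_q(\nabla u)^q)(P)\bigr)^{1/q}$, and taking $L^p$ norms gives the $B_p$ inequality. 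If you want to salvage your route, you would have to prove the local $(R)_p$ estimate for solutions vanishing on a surface ball and the existence of the conormal trace of $G$; as the paper stands, the maximum-principle comparison with an explicit auxiliary solution is what replaces both.
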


The paper is organized as follows. In section 2 we introduce $\Lip(1,1/2)$ cylinders, a suitable local pullback transformation, parabolic non-tangential maximal functions, and the $L^p_{1,1/2}$ parabolic Sobolev space on $\R^n$ and domains. In section 3 we state and prove some basic results for parabolic equations and some lemmas needed for the proof of \cref{T:Rp}. In section 4 we prove our main result \cref{T:Rp}.

\textit{Acknowledgements.} Luke Dyer was supported by The Maxwell Institute Graduate School in Analysis and its Applications, a Centre for Doctoral Training funded by the UK Engineering and Physical Sciences Research Council (grant EP/L016508/01), the Scottish Funding Council, the University of Edinburgh and Heriot-Watt University.

\section{Preliminaries}

Here and throughout we consistently use $\nabla u$ to denote the gradient in the spatial variables, $u_t$ or $\partial_t u$ the gradient in the time variable and use $Du=(\nabla u, \partial_t u)$ for the full gradient of $u$.

\subsection{Parabolic measure}
It is well known by the Perron-Wiener-Brelot method \cite{Ekl79} that the parabolic PDE (\ref{E:pde}) with continuous boundary data is uniquely solvable (c.f.\ \cref{R:max}) and that there exists a unique measure $\omega^{(X,t)}$, called the \textit{parabolic measure}, such that
\begin{equation}
	\label{E:pm}
	u(X,t)=\int_{\partial \Omega} f(y,s) \dw^{(X,t)}(y,s)
\end{equation}
for all continuous data $f$.
Under the assumptions of \cref{D:domain} this measure is doubling (\cite{Nys97}).
As $\omega^{(X,t)}$ is a Borel measure, it follows that we can use \eqref{E:pm} to extend the solvability of \eqref{E:pde} to a class of bounded Borel measurable functions $f$.

\subsection{Lip(1,1/2) cylinders}

In this subsection we recall the class of $\Lip(1,1/2)$  time-varying cylinders in \cite{Nys97} whose boundaries are given locally as functions $\phi(x,t)$, Lipschitz in the spatial variable and $\Lip_{1/2}$ in the time variable.
At each time $\tau\in\R$ the set of points in $\Omega$ with fixed time $t=\tau$, that is
$\Omega_\tau=\{(X,\tau)\in\Omega\}$, will be a non-empty bounded Lipschitz domain in $\R^n$.
We start with few preliminary definitions, motivated by the standard definition of a Lipschitz domain.

\begin{definition}
	$\Z \subset \R^n\times \R$ is an \textit{$\ell$-cylinder} of diameter $d$ if there
	exists a coordinate system $(x_0,x,t)\in \R\times\R^{n-1}\times \R$ obtained from the original coordinate system only by translation in spatial and time variables and rotation in the spatial variables such that
	\[
		\Z = \{ (x_0,x,t) \st |x|\leq d, |t|\leq d^2, -(\ell +1)d \leq x_0 \leq (\ell + 1)d \}
	\]
	and for $s>0$
	\[
		s\Z:=\{(x_0,x,t) \st |x|<sd, |t|\leq s^2d^2, -(\ell +1)sd \leq x_0 \leq (\ell +1)sd \}.
	\]
\end{definition}

\begin{definition}
	\label{D:domain}
	$\Omega\subset \R^n\times \R$ is a \textit{$\Lip(1,1/2)$ cylinder} with character $(\ell,N,C_0)$ if there exists a positive scale $r_0$ such that for any time $\tau\in\R$	there are at most $N$ $\ell$-cylinders $\{{\Z}_j\}_{j=1}^N$ of diameter $d$, with $\frac{r_0}{C_0}\leq d \leq C_0 r_0$, satisfying the following:
	\begin{enumerate}
		\item $8{\Z}_j \cap \partial\Omega$ is the graph $\{x_0=\phi_j(x,t)\}$ of a
		      function $\phi_j$ such that
		      \begin{equation}
			      \label{E:L1}
			      |\phi_j(x,t)- \phi_j(y,s)| \leq \ell \left( |x-y| + |t-s|^{1/2}\right) \text{ and }
			      \phi_j(0,0)=0.
		      \end{equation}

		\item $\displaystyle \partial\Omega\cap\{|t-\tau|\le d^2\}=\bigcup_j ({\Z}_j \cap 	\partial\Omega)$.

		\item In the coordinate system $(x_0,x,t)$ of the $\ell$-cylinder $\Z_j$
		      \[
			      \displaystyle{\Z}_j \cap \Omega \supset \left\{
			      (x_0,x,t)\in\Omega \st |x|<d, |t|<d^2, \delta(x_0,x,t) := \dist \left( (x_0,x,t),\partial\Omega
			      \right) \leq \frac{d}{2}\right\}.
		      \]
		      Here and throughout $\dist$ is the parabolic distance $\dist[(X,t),(Y,\tau)]=|X-Y|+|t-\tau|^{1/2}$.
	\end{enumerate}
\end{definition}

The \textit{parabolic norm} $\|(X,t)\|$ on $\R^n \times \R$ is defined as the unique positive solution $\rho$ to the following equation
\begin{equation}
	\label{E:par-norm}
	\frac{|X|^2}{\rho^2} + \frac{t^2}{\rho^4} = 1.
\end{equation}
One can easily show that $\|(X,t)\| \sim |X| + |t|^{1/2}$ and that this norm has the correct scaling.

\begin{remark}
	It follows from this definition that for each $\tau\in\R$ the time-slice $\Omega_\tau=\Omega\cap\{t=\tau\}$ of a $Lip(1,1/2)$ cylinder $\Omega\subset \R^n\times \R$ is a bounded Lipschitz domain in $\R^n$ with character $(\ell,N,C_0)$.
	Due to this fact, the Lipschitz domains $\Omega_{\tau}$ for all $\tau\in\R$ have all uniformly bounded diameter (from below and above).
	That is
	\[
		\inf_{\tau\in\R}\diam(\Omega_\tau) \sim r_0 \sim \sup_{\tau\in\R}\diam(\Omega_\tau),
	\]
	where $r_0$ is the scale from Definition \ref{D:domain} and the implied constants in the estimate only depend on $N$ and $C_0$.
	In particular, if ${\mathcal O}\subset \R^n$ is a bounded Lipschitz domain then the parabolic cylinder $\Omega={\mathcal O}\times \R$ is an example of a domain satisfying Definition \ref{D:domain}.
\end{remark}

\begin{definition}
	\label{D:measure}
	Let $\Omega\subset \R^n\times \R$ be a $Lip(1,1/2)$ cylinder with character $(\ell, N,C_0)$.
	We define the \textit{measure} $\sigma$ on sets $A\subset \partial\Omega$ to be
	\begin{equation}
		\label{E:sigma}
		\sigma(A)=\int_{-\infty}^\infty {\H}^{n-1}\left(A\cap\{(X,t)\in\partial\Omega\}\right) \dt,
	\end{equation}
	where ${\H}^{n-1}$ is the $n-1$ dimensional Hausdorff measure on the Lipschitz boundary $\partial\Omega_\tau = \{(X,\tau)\in\partial\Omega\}$.
\end{definition}

We consider solvability of the $L^p$ Dirichlet and $L^p$ regularity boundary value problems with respect to the measure $\sigma$.
The measure $\sigma$ may not be comparable to the usual surface measure on $\partial\Omega$: in the $t$-direction the functions $\phi_j$ from the Definition \ref{D:domain} are only $\Lip_{1/2}$ and hence the standard surface measure might not be locally finite.
However, our definition assures that for any $A\subset 8\Z_j$, where $\Z_j$ is an $\ell$-cylinder, we have
\begin{equation}
	\label{E:comp}
	\H^n(A) \sim \sigma\left(\{(\phi_j(x,t),x,t):\,(x,t)\in A\}\right),
\end{equation}
where the constants in \cref{E:comp}, by which these measures are comparable, only depend on the $\ell$ of the character $(\ell,N,C_0)$ of the domain $\Omega$.
If $\Omega$ has a smoother boundary, such as Lipschitz (in all variables) or better, then the measure $\sigma$ is comparable to the usual $n$-dimensional Hausdorff measure $\H^n$.
In particular, this holds for a parabolic cylinder $\Omega={\mathcal O}\times \R$.

\begin{definition} Let $\Omega$ be a $\Lip(1,1/2)$ cylinder from Definition \ref{D:domain}.
	For $(y,s)\in\partial\Omega$, $(X,t), (Z, \tau) \in \Omega$ and $r>0$ we write:
	\begin{align*}
		B_r(X,t)      & =\{(Z,\tau)\in{\R}^{n}\times \R \st \dist[(X,t),(Z,\tau)]<r \},                                                        \\
		Q_{r} (X, t)
		              & = \{ (Z,\tau) \in \R^{n}\times\R \st |x_i - z_i| < r \text{ for all } 0 \leq i \leq n-1, \, | t - \tau |^{1/2} < r \}, \\
		\Psi_r(y,s)   & = \{ (Z,\tau) \in \R^{n}\times\R \st |x_0 - z_0| < (\ell + 1)r, \, |x_i - z_i| < r, \, | t - \tau |^{1/2} < r \},      \\
		\Delta_r(y,s) & = \partial \Omega\cap B_r(y,s),	\qquad T(\Delta_r) = \Omega\cap B_r(y,s),                                               \\
		\delta(X,t)   & =\inf_{(y,s)\in \partial\Omega} \dist[(X,t),(y,s)].
	\end{align*}
\end{definition}

\begin{definition}[Corkscrew points]
	Let $\Omega$ be a $\Lip(1,1/2)$ cylinder from \cref{D:domain} and $r_0>0$ the scale defined there.
	For any boundary ball $\Delta_r=\Delta_r(Y,s)\subset\partial\Omega$ with $0<r\lesssim r_0$ we say that a point $(X,t)\in\Omega$ is a \textit{corkscrew point} of the ball $\Delta_r$ if
	\[
		t=s+2r^2 \quad \text{and}  \quad \delta(X,t)\sim r\sim \dist[(X,t),(Y,s)].
	\]
	That is the point $(X,t)$ is an interior point of $\Omega$ of distance to the ball $\Delta_r$ and the boundary $\partial\Omega$ of order $r$.
	The point $(X,t)$ lies at the time of order $r^2$ further than the times for the ball $\Delta_r$.
	Finally, the implied constants in the definition above only depend on the domain $\Omega$ but not on $r$ and the point $(Y,s)$.

	Each ball of radius $0<r\lesssim r_0$ has infinitely many corkscrew points; for each ball we choose one and denote it by $V(\Delta_r)$ or if there is no confusion to which ball the corkscrew point belongs just $V_r$.
\end{definition}

\begin{remark}
	Given the fact that the time slices $\Omega_\tau$ of the domain $\Omega$ are of approximately diameter $r_0$ the corkscrew points do not exists for balls of sizes $r \gg r_0$.
\end{remark}

\subsection{Parabolic Non-tangential Cones and Maximal Functions}

We proceed with the definition of parabolic non-tangential cones. We define the cones in a (local) coordinate system where $\Omega=\{(x_0,x,t) \st x_0>\phi(x,t)\}$.
In particular this also applies to the upper half-space $U=\{(x_0,x,t) \st x_0>0\}$.
We note here, that a different choice of coordinates (naturally) leads to different sets of cones, but the particular choice of non-tangential cones is not important as it only changes constants in the estimates for the non-tangential maximal function defined using these cones.
However the norms defined using different sets of non-tangential cones are comparable.

For a constant $a>0$, we define the \textit{parabolic non-tangential cone} at a point $(x_0,x,t)\in\partial\Omega$ as follows
\begin{equation*}
	\label{D:cone}
	\Gamma_{a}(x_0, x, t) = \left\{(y_0, y, s)\in \Omega \st |y - x| + |s-t|^{1/2} < a(y_0 - x_0), \  x_0 < y_0 \right\}.
\end{equation*}
We occasionally truncate the cone $\Gamma$ at the height $r$
\begin{equation*}
	\label{D:coner}
	\Gamma_{a}^{r}(x_0, x, t) = \left\{(y_0, y, s)\in \Omega \st |y - x| + |s-t|^{1/2} < a(y_0 - x_0), \  x_0 < y_0 < x_0 + r  \right\}.
\end{equation*}

\begin{definition}[non-tangential	maximal function]
	For a function $u : \Omega \rightarrow \R$, the \textit{non-tangential maximal function} $N_a(u): \partial\Omega \to \R$ and its truncated version at a height $r$ are defined as
	\begin{equation}
		\label{D:NTan}
		\begin{split}
			N_{a}(u)(x_0,x,t) &= \sup_{(y_0,y,s)\in \Gamma_{a}(x_0,x,t)} \left|u(y_0 , y, s)\right|, \\
			N_{a}^{r}(u)(x_0,x,t) &= \sup_{(y_0,y,s)\in \Gamma_{a}^{r}(x_0,x,t)} \left|u(y_0 , y, s)\right|\quad\mbox{for }(x_0,x,t)\in\partial\Omega.
		\end{split}
	\end{equation}

	We also define the following $L^p$ variant of the non-tangential maximal function
	\begin{equation}
		\label{D:NTanVar}
		\widetilde{N}_p(u)(x_0,x,t) =
		\sup_{(Y,s) \in\Gamma_a^r(x_0,x,t)} \left(\fint_{B_{{\delta(Y,s)}/{2}}(Y,s)}|u(Z, \tau)|^p \dZ \dtau \right)^{\frac{1}{p}}.
	\end{equation}
\end{definition}

\subsection{Parabolic Sobolev Space on $\partial\Omega$}
\label{S:paraSobolev}

When considering the appropriate function space for our boundary data we want it to have the same homogeneity as the PDE.
As a rule of thumb one derivative in time behaves like two derivatives in space and so the correct order of our time derivative should be $1/2$ if we impose data with one derivative in spatial variables.
This problem has been studied previously in \cites{HL96,HL99,Nys06}, who have followed \cite{FJ68} in defining the homogeneous parabolic Sobolev space $\dot{L}^p_{1,1/2}$ in the following way.

\begin{definition}
	\label{D:paraSob}
	The \textit{homogeneous parabolic Sobolev space} $\dot{L}^p_{1,1/2}(\R^n)$, for $1 < p < \infty$, is defined to consist of equivalence classes of functions $f$ with distributional derivatives satisfying $\|f\|_{\dot{L}^p_{1,1/2}(\R^n)} < \infty$, where
	\begin{equation}
		\|f\|_{\dot{L}^p_{1,1/2}(\R^n)} = \|\D f\|_{L^p(\R^n)}
	\end{equation}
	and
	\begin{equation}
		(\D f)\,\widehat{\,}\,(\xi,\tau) := \|(\xi,\tau)\| \widehat{f}(\xi,\tau).
	\end{equation}

	We also define the \textit{inhomogeneous parabolic Sobolev space} $L^p_{1,1/2}(\R^n)$ as an equivalence class of functions $f$ with distributional derivatives satisfying  $\|f\|_{L^p_{1,1/2}(\R^n)} < \infty$, where
	\begin{equation}
		\|f\|_{L^p_{1,1/2}(\R^n)} = \|\D f\|_{L^p(\R^n)} + \|f\|_{L^p(\R^n)}.
	\end{equation}
\end{definition}

Other authors \cites{Bro89,Bro90,HL99,Mit01,Nys06,CRS15} have only considered either Lipschitz cylinders or graph domains and so have only needed to control the homogeneous norm. Because we are considering an infinite time-varying cylinder made from a local collection of graphs $\phi_j$ we need to have additional control over the $L^p$ norm of $f$ to control terms that arise from taking a smooth partition of unity.

In addition, following \cite{FR67}, we define a parabolic half-order time derivative by
\begin{equation}
	(\D_n f)\,\widehat{\,}\,(\xi,\tau) := \frac{\tau}{\|(\xi,\tau) \|} \widehat{f}(\xi,\tau).
\end{equation}
By parabolic singular integral theory \cites{FR66,FR67} we have that
\begin{align}
	\|\D f\|_{L^p(\R^n)} \sim \|\nabla f\|_{L^p(\R^n)} + \|\D_n f\|_{L^p(\R^n)}.
\end{align}

One result of this paper is that we have another characterisation of the spaces $\dot{L}^p_{1,1/2}(\R^n)$ and $L^p_{1,1/2}(\R^n)$ by an equivalent norm.
By applying Plancherel's theorem for $p=2$ we have
\begin{equation}
	\|\D f\|_{L^2(\R^n)} \sim \|D^t_{1/2} f\|_{L^2(\R^n)} + \|\nabla f\|_{L^2(\R^n)},
\end{equation}
where $D^t_{1/2}$ denotes the one-dimensional half fractional derivative of $f$ in the time variable.
We show in \cref{T:Dn} that this equivalence holds for all $1 < p < \infty$.

If $0 < \alpha \leq 2$, then for $g \in C_0^\infty(\R)$ the \textit{one-dimensional fractional differentiation operators} $D_\alpha$ are defined by
\begin{equation}
	(D_\alpha g)\,\widehat{\,}\,(\tau) := |\tau|^\alpha \widehat{g}(\tau).
\end{equation}
It is also well known that if $0 < \alpha < 1$ then
\begin{equation}
	D_\alpha g(s) = c\int_{\R} \frac{g(s) - g(\tau)}{|s-\tau|^{1 + \alpha}} \dtau
\end{equation}
whenever $s \in \R$.
If $h(x,t) \in C_0^\infty(\R^n)$ then by $D_\alpha^t h: \R^n \rightarrow \R$ we mean the function $D_\alpha h(x, \cdot)$ defined a.e.\ for each fixed $x \in \R^{n-1}$.

Since $\frac{|\tau|^{1/2}}{\|(\xi, \tau)\|}$ is an $L^p$ multiplier for $1 < p < \infty$ \cite{Ste70}*{Theorem 6, p.~109} we have the following bound.
\begin{lemma}
	\label{L:Dt}
	Let $f: \R^{n} \rightarrow \R$ and $1 < p < \infty$ then
	\begin{equation}
		\|D_t^{1/2} f\|_{L^p(\R^{n})} \lesssim \|\D f\|_{L^p(\R^{n})}.
	\end{equation}
\end{lemma}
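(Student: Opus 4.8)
The plan is to recognise $D_t^{1/2}$ as a Fourier multiplier operator applied to $\D f$ and then to invoke a multiplier theorem. For $f\in C_0^\infty(\R^n)$ one has on the Fourier side
\[
	(D_t^{1/2}f)\,\widehat{\,}\,(\xi,\tau)=|\tau|^{1/2}\widehat{f}(\xi,\tau)=m(\xi,\tau)\,\|(\xi,\tau)\|\,\widehat{f}(\xi,\tau)=m(\xi,\tau)\,(\D f)\,\widehat{\,}\,(\xi,\tau),
\]
where $m(\xi,\tau):=|\tau|^{1/2}/\|(\xi,\tau)\|$. Hence it suffices to prove that the multiplier operator $T_m$ with symbol $m$ is bounded on $L^p(\R^n)$ for every $1<p<\infty$; the estimate for a general $f$ then follows by density of $C_0^\infty(\R^n)$.

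Next I would record the structural properties of $m$ that make it admissible. It is bounded: from \eqref{E:par-norm} one gets $\tau^2\le\|(\xi,\tau)\|^4$, so $|\tau|^{1/2}\le\|(\xi,\tau)\|$ and $|m|\le 1$. It is homogeneous of degree $0$ with respect to the parabolic dilations $(\xi,\tau)\mapsto(\lambda\xi,\lambda^2\tau)$, because $\|(\lambda\xi,\lambda^2\tau)\|=\lambda\,\|(\xi,\tau)\|$ while $|\lambda^2\tau|^{1/2}=\lambda\,|\tau|^{1/2}$. Away from the hyperplane $\{\tau=0\}$ the symbol $m$ is smooth with the expected anisotropic decay of all its derivatives, and its only irregularity across $\{\tau=0\}$ comes from the mild factor $|\tau|^{1/2}$, which is monotone --- hence of uniformly bounded variation --- on each dyadic block in $\tau$, with the analogous block-normalised bounded-variation estimates for the mixed derivatives in the remaining variables. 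These are precisely the hypotheses of the Marcinkiewicz multiplier theorem \cite{Ste70}*{Theorem 6, p.~109} (the mixed-homogeneity viewpoint being that of \cites{FR66,FR67}), which yields the desired $L^p$ bound for $T_m$, and hence the lemma.

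I expect the only genuine obstacle to be that $m$ is not $C^1$ across $\{\tau=0\}$, so a naive Mikhlin--H\"ormander argument is unavailable and one must use the Marcinkiewicz form of the theorem, in which bounded variation on dyadic rectangles replaces pointwise derivative bounds; alternatively one can run the estimate through the Calder\'on--Zygmund theory of kernels with mixed homogeneity that the paper already uses for $\D_n$. A self-contained substitute is a Littlewood--Paley decomposition in the $\tau$ variable: writing $|\tau|^{1/2}=\sum_j\phi_j(\tau)|\tau|^{1/2}$ with $\phi_j$ a smooth partition adapted to $\{|\tau|\sim 4^j\}$, each piece $\phi_j(\tau)|\tau|^{1/2}/\|(\xi,\tau)\|$ is a genuinely smooth, parabolically degree-$0$ symbol --- a parabolic rescaling of the single symbol $\phi_0(\tau)|\tau|^{1/2}/\|(\xi,\tau)\|$ --- so the corresponding operators are $L^p$-bounded with one uniform constant, and since the pieces have almost disjoint frequency supports in $\tau$ they reassemble by the one-dimensional Littlewood--Paley square-function inequality. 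Citing Marcinkiewicz is, of course, shorter.
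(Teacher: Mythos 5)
Your argument is exactly the paper's: the paper proves this lemma by the single observation that $|\tau|^{1/2}/\|(\xi,\tau)\|$ is an $L^p$ multiplier for $1<p<\infty$ by \cite{Ste70}*{Theorem 6, p.~109}, which is precisely the Marcinkiewicz route you take, and your verification of the hypotheses (boundedness via $|\tau|^{1/2}\le\|(\xi,\tau)\|$, parabolic homogeneity of degree $0$, and bounded variation on dyadic blocks to handle the non-smoothness across $\{\tau=0\}$) correctly fills in what the paper leaves implicit. The Littlewood--Paley alternative you sketch is a sound fallback but is not needed.
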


\begin{theorem}
	\label{T:Dn}
	Let $f: \R^{n} \rightarrow \R$ and $1 < p < \infty$ then
	\begin{equation}
		\label{E:Dn}
		\|\D_n f\|_{L^p(\R^{n})} \lesssim \|D^t_{1/2} f\|_{L^p(\R^{n})} + \|\nabla f\|_{L^p(\R^{n})}.
	\end{equation}
	Therefore $\|f\|_{\dot{L}^p_{1,1/2}(\R^n)} = \|\D f\|_{L^p(\R^{n})} \sim \|D^t_{1/2} f\|_{L^p(\R^{n})} + \|\nabla f\|_{L^p(\R^{n})}$ for $1 < p < \infty$ and so
	\[	\|f\|_{L^p_{1,1/2}(\R^n)} \sim \|D^t_{1/2} f\|_{L^p(\R^{n})} + \|\nabla f\|_{L^p(\R^{n})} + \|f\|_{L^p(\R^{n})}. \]
\end{theorem}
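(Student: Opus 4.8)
The plan is to read \eqref{E:Dn} as a Fourier multiplier estimate and to realise $\D_n$ as a bounded operator on $L^p(\R^n)$ applied to $D^t_{1/2} f$. On the Fourier side $(\D_n f)\,\widehat{\,}\,(\xi,\tau)=\frac{\tau}{\|(\xi,\tau)\|}\widehat f(\xi,\tau)$, and the only factor of this symbol that is not bounded is a single power $|\tau|^{1/2}$, which is exactly the symbol of $D^t_{1/2}$. So the idea is to peel off the factor $|\tau|^{1/2}\widehat f=(D^t_{1/2}f)\,\widehat{\,}$ and to check that what remains is an $L^p$ multiplier.

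Concretely, I would write, for $f$ in a dense subclass (e.g.\ Schwartz functions, the general case following by density together with the $L^p$-boundedness of all the operators involved),
\[
	(\D_n f)\,\widehat{\,}\,(\xi,\tau)
	=\frac{\operatorname{sgn}(\tau)\,|\tau|^{1/2}}{\|(\xi,\tau)\|}\cdot|\tau|^{1/2}\widehat f(\xi,\tau)
	=\frac{\operatorname{sgn}(\tau)\,|\tau|^{1/2}}{\|(\xi,\tau)\|}\cdot\bigl(D^t_{1/2}f\bigr)\,\widehat{\,}\,(\xi,\tau).
\]
Here $\operatorname{sgn}(\tau)$ is, up to a constant, the symbol of the partial Hilbert transform $H_t$ in the time variable, which is bounded on $L^p(\R^n)$ for $1<p<\infty$ by the one-dimensional theory and Fubini, while $\frac{|\tau|^{1/2}}{\|(\xi,\tau)\|}$ is an $L^p$ multiplier as recalled just before \cref{L:Dt}. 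Since the symbols of $L^p$-bounded Fourier multiplier operators are stable under products, $\frac{\operatorname{sgn}(\tau)|\tau|^{1/2}}{\|(\xi,\tau)\|}$ is an $L^p$ multiplier; equivalently $\D_n f=c\,T\,H_t\,D^t_{1/2}f$, where $T$ is the multiplier operator of symbol $|\tau|^{1/2}/\|(\xi,\tau)\|$. Taking $L^p$ norms gives $\|\D_n f\|_{L^p(\R^n)}\lesssim\|D^t_{1/2}f\|_{L^p(\R^n)}\le\|D^t_{1/2}f\|_{L^p(\R^n)}+\|\nabla f\|_{L^p(\R^n)}$, which is \eqref{E:Dn} (in fact a touch stronger; the $\|\nabla f\|_{L^p}$ term is retained only to match the homogeneity of the equivalences below). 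As an alternative, more hands-on route one can instead introduce a smooth parabolically homogeneous partition of the frequency space into the regions $|\tau|^{1/2}\lesssim|\xi|$ and $|\xi|\lesssim|\tau|^{1/2}$ and, after factoring out bounded parabolic multipliers, estimate the first piece of $\D_n f$ by $\nabla f$ and the second by $D^t_{1/2}f$; it is in this version that the $\|\nabla f\|_{L^p}$ term genuinely enters.

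Once \eqref{E:Dn} is in hand, the stated norm equivalences are bookkeeping. \cref{L:Dt} gives $\|D^t_{1/2}f\|_{L^p}\lesssim\|\D f\|_{L^p}$, and the parabolic singular integral identity $\|\D f\|_{L^p}\sim\|\nabla f\|_{L^p}+\|\D_n f\|_{L^p}$ recalled above gives $\|\nabla f\|_{L^p}\lesssim\|\D f\|_{L^p}$; together these yield $\|D^t_{1/2}f\|_{L^p}+\|\nabla f\|_{L^p}\lesssim\|\D f\|_{L^p}$. Conversely the same identity and \eqref{E:Dn} give $\|\D f\|_{L^p}\lesssim\|\nabla f\|_{L^p}+\|\D_n f\|_{L^p}\lesssim\|\nabla f\|_{L^p}+\|D^t_{1/2}f\|_{L^p}$. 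Hence $\|f\|_{\dot{L}^p_{1,1/2}(\R^n)}=\|\D f\|_{L^p}\sim\|D^t_{1/2}f\|_{L^p}+\|\nabla f\|_{L^p}$, and adding $\|f\|_{L^p}$ to both sides gives the claim for $L^p_{1,1/2}(\R^n)$.

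The main obstacle is the single multiplier input: that $\frac{|\tau|^{1/2}}{\|(\xi,\tau)\|}$ (equivalently its $\operatorname{sgn}(\tau)$-twist) is a bounded Fourier multiplier on $L^p(\R^n)$. The symbol is only parabolically homogeneous of degree zero and fails to be $C^1$ across $\{\tau=0\}$, so the classical Mikhlin condition does not apply directly; one needs a Marcinkiewicz-type theorem (or the parabolic Calder\'on--Zygmund calculus of Fabes and Rivi\`ere), verifying that the symbol has uniformly bounded dyadic variation in each variable together with the mixed variations. This is precisely the fact cited from \cite{Ste70} before \cref{L:Dt}, so with it granted the remainder of the argument is short.
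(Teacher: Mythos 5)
Your argument is correct, but your primary route is genuinely different from (and shorter than) the paper's. The paper follows Hofmann--Lewis: it writes $(\D_n f)^\wedge=m(\xi,\tau)\,|\tau|^{1/2}\hat f$ with $m=\tau/(|\tau|^{1/2}\|(\xi,\tau)\|)$, observes that $m$ is too rough near $\{\tau=0\}$ for a direct application of the parabolic multiplier theory, and therefore splits $m$ by a smooth cut-off in the variable $\tau/\|(\xi,\tau)\|^2$ into a piece $m^+$ supported where $|\tau|\sim\|(\xi,\tau)\|^2$ (paired with $D^t_{1/2}f$) and pieces $m^{++}_j$ supported where $|\tau|\lesssim\|(\xi,\tau)\|^2$ (paired with $\partial_{x_j}f$), then verifies the Calder\'on--Zygmund-type derivative estimates \eqref{E:m+} and \eqref{E:m++} and invokes the Fabes--Rivi\`ere theory of singular integrals with mixed homogeneity for each piece; the $\|\nabla f\|_{L^p}$ term genuinely arises from the $m^{++}_j$ pieces. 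This is exactly the ``alternative, more hands-on route'' you sketch at the end. Your main route instead factors the whole symbol through $D^t_{1/2}$ in one step, as $\operatorname{sgn}(\tau)$ (the Hilbert transform in $t$, bounded on $L^p(\R^n)$ by the one-dimensional theory and Fubini) times $|\tau|^{1/2}/\|(\xi,\tau)\|$, which the paper has already certified as an $L^p$ multiplier in the discussion preceding \cref{L:Dt}; granting that input is legitimate, since \cref{L:Dt} rests on the identical fact. Your argument is therefore sound and even yields the stronger one-sided bound $\|\D_n f\|_{L^p}\lesssim\|D^t_{1/2}f\|_{L^p}$; the gradient term is only needed for the converse inequality, where your bookkeeping via $\|\D f\|_{L^p}\sim\|\nabla f\|_{L^p}+\|\D_n f\|_{L^p}$ and \cref{L:Dt} coincides with the paper's. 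What the paper's decomposition buys is independence from the Marcinkiewicz theorem within the proof itself and explicit operators $T_{m^+}$, $T_{m^{++}_j}$ with kernel bounds reusable for endpoint ($\BMO$/$L^\infty$) estimates in the style of Hofmann--Lewis; what your factorization buys is brevity and a sharper estimate. The one point you should make explicit if you write this up is the verification that $|\tau|^{1/2}/\|(\xi,\tau)\|$ satisfies the Marcinkiewicz (not the isotropic Mikhlin) hypotheses, since the symbol is not $C^1$ across $\{\tau=0\}$ with Mikhlin bounds; this is routine and is the same verification implicitly underlying \cref{L:Dt}.
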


The proof uses the same approach as \cite{HL96}*{Section 7} to obtain $L^p$ bounds instead of their mixed $\BMO$ and $L^\infty$ bounds.

\begin{proof}
	By approximation we may assume that $f \in C^\infty_0(\R^n)$ and also that $f(0) = 0$ by replacing $f$ by $f - f(0)$ and noting that $\D_n$ and $D^t_{1/2}$ map constants to the 0 element.
	Let
	\[
		m(\xi, \tau) = \frac{\tau}{|\tau|^{1/2} \|(\xi, \tau)\|}
	\]
	then we have
	\[
		(\D_n f)^\wedge (\xi, \tau) = \hat{f}(\xi, \tau) \left( m(\xi, \tau)|\tau|^{1/2}\right)
	\]
	for $(\xi, \tau) \in \R^n$, where $^\wedge$ denotes the Fourier transform on $\R^n$.

	This multiplier $m$ is not smooth enough to apply standard multiplier theorems so as in \cite{HL96} we use a smooth cut off function to split this multiplier into two.
	Let $\phi \in C^\infty_0(\R)$ be an even function with $\phi = 1$ on $(-3/2, -1/2), (1/2, 3/2)$, supported in $(-2, -1/4), (1/4, 2)$ and choose $\phi$ such that $|D^k \phi| \lesssim 2^k$ for $0 \leq k \leq n+4$.
	Let
	\[
		m^+(\xi, \tau) = m(\xi, \tau) \phi\left( \frac{\tau}{\|(\xi, \tau)\|^2} \right)
	\]
	and
	\[
		m^{++}(\xi, \tau) = \frac{|\tau|^{1/2} m(\xi, \tau) \|(\xi, \tau)\|}{|\xi|^2} (1- \phi)\left( \frac{\tau}{\|(\xi, \tau)\|^2} \right)
	\]
	then
	\[
		(\D_n f)^\wedge (\xi, \tau) = \hat{f}(\xi, \tau) \left( m^+(\xi, \tau)|\tau|^{1/2} + \frac{|\xi|^2}{\|(\xi, \tau)\|} m^{++}(\xi, \tau) \right).
	\]
	Let $m^{++}_j(\xi, \tau) = \frac{\xi_j}{\|(\xi, \tau)\|} m^{++}(\xi, \tau)$ for $0 \leq j \leq n-1$ then we show there exists singular integral operators $T_{m_j^{++}}$ and $T_{m^+}$ corresponding to $m^{++}_j$ and $m^+$ respectively such that
	\begin{equation}
		\D_n f = cT_{m^+}( D^t_{1/2} f) + c \sum_{j = 0}^{n-1} T_{m_j^{++}}( \partial_{x_j} f).
	\end{equation}
	All we have to show is that $T_{m^+}$ and $T_{m_j^{++}}$ exist and map $L^p$ into $L^p$ for $1 < p < \infty$.

	First we consider $m^+$, which is infinitely differentiable away from the origin.
	It is not hard to show that if $\gamma$ is a multi-index and $a$ a non-negative integer then
	\begin{equation}
		\label{E:m+}
		|\partial^\gamma_\xi \partial^a_\tau m^+(\xi, \tau)| \lesssim \|(\xi, \tau)\|^{-(|\gamma| + 2a)},
	\end{equation}
	for $1 \leq a + |\gamma| \leq n + 4$, and that $|m^+(\xi, \tau)| \lesssim 1$.
	By singular integral with mixed homogeneity theory \cite{FR66}*{p.~28} we have that $T_{m^+}$ exists and is bounded on $L^p$ for $1 < p < \infty$.

	Similarly considering $m_j^{++}$, by \cite{HL96}*{(7.10)-(7.11)} we have
	\begin{equation}
		\label{E:m++}
		|\partial^\gamma_\xi \partial^a_\tau m^{++}_j(\xi, \tau)| \lesssim |\tau|^{1/2 - a} \|(\xi, \tau)\|^{-(|\gamma| + 1)},
	\end{equation}
	for $0 \leq a + |\gamma| \leq n + 4$ and that the support of $m^{++}_j$ is contained in
	\begin{equation}
		\label{E:m++:supp}
		\left\lbrace (\xi, \tau) \st 0 \leq |\tau| \leq \|(\xi, \tau)\|^2/2 \right\rbrace .
	\end{equation}
	Using these $|m^{++}_j(\xi, \tau)| \lesssim 1$ and by the same argument as before $T_{m_j^{++}}$ exists and is bounded on $L^p$ for $1 < p < \infty$.
\end{proof}

So far we have only studied this parabolic Sobolev space $L^p_{1,1/2}$ on $\R^n$ however our aim is to work on the boundary $\partial\Omega$ where $\Omega$ is as in \cref{D:domain}.
\begin{definition}[Parabolic Sobolev spaces on $\Lip(1,1/2)$ cylinders]
	\label{D:paraSob:domain}
	Let $1 < p < \infty$ and $\Omega$ be a $\Lip(1,1/2)$ cylinder as in \cref{D:domain} with local mappings $\phi_j: U \to 8\Z_j\cap\partial\Omega$, where $U$ is the upper half space.
	Let $\eta_j$ be a smooth partition of unity of $\partial\Omega$ with the following properties:
	\begin{enumerate}
		\item $0 \leq \eta_j \leq 1$,
		\item $\sum \eta_j = 1$,
		\item the $\eta_j$ have bounded overlap: i.e.\ for each fixed $(x,t)$ $\#\{j: \eta_i(x,t) > 0\} \leq M$ and
		\item $\supp \eta_j \subset B_{r_j}(x_j,t_j)$ with $r_j \sim \sup_\tau \diam(\Omega_\tau)$.
	\end{enumerate}
	We then define the $L^p_{1,1/2}$ norm on $\partial\Omega$ as
	\begin{equation}
		\|f\|_{L^p_{1,1/2}(\partial\Omega)} =
		\left( \sum_j \left( \|\D \left( (f\eta_j) \circ\phi_j\right) \|^p_{L^p(\R^n)}
		+ \|(f\eta_j) \circ\phi_j\|^p_{L^p(\R^n)} \right)\right)^{1/p}.
	\end{equation}
	By the relationship in \cref{T:Dn} this is equivalent to
	\begin{equation}
		\begin{split}
			\label{E:paraSob:domain}
			\|f\|^p_{L^p_{1,1/2}(\partial\Omega)} \sim
			\sum_j \Big( &\|\nabla\left( (f\eta_j) \circ\phi_j\right) \|^p_{L^p(\R^n)} + \|D^t_{1/2}\left( (f\eta_j) \circ\phi_j\right) \|^p_{L^p(\R^n)} \\
			&+ \|(f\eta_j) \circ\phi_j\|^p_{L^p(\R^n)} \Big).
		\end{split}
	\end{equation}
	It can be shown that when $\partial\Omega = \R^n$ the norm defined here is equivalent to the one given in \cref{D:paraSob}.
\end{definition}

\subsection{$L^p$ Regularity and $L^p$ Dirichlet Boundary Value Problems}

We are now in the position to define the $L^p$ regularity and $L^p$ Dirichlet problems.

\begin{definition}[\cite{Aro68}]
	\label{D:weak}
	We say that $u$ is a \textit{weak solution} to a parabolic operator of the form \cref{E:pde} in $\Omega$ if $u, \nabla u \in L^2_\loc(\Omega)$, $\sup_t \|u(\cdot, t)\|_{L^2_\loc (\Omega_t)} < \infty$ and
	\begin{equation*}
		\int_\Omega (-u \phi_t + A\nabla u \cdot \nabla \phi) \dX\dt = 0
	\end{equation*}
	for all $\phi \in C^\infty_0(\Omega)$.
	A weak solution to the adjoint operator \cref{E:pde:adjoint} is defined similarly.
\end{definition}

\begin{definition}
	\label{D:Rp}
	In light of \cref{S:paraSobolev}, following \cites{Bro89,Bro90,HL96,HL99} we say the $L^p$ \textit{Regularity problem} for the equation \cref{E:pde} is solvable if the the unique solution $u$ of this equation in $\Omega$ with boundary data $f\in C(\partial\Omega)\cap L^p_{1,1/2}(\partial\Omega, \dsig)$
	satisfies the following non-tangential maximal function estimate
	\begin{equation}
		\label{E:Rp}
		\|\widetilde{N}_2(\nabla u)\|_{L^p(\partial\Omega, \dsig)} \lesssim \|f\|_{L^p_{1,1/2}(\partial\Omega, \dsig)},
	\end{equation}
	with the implied constants depending only on the ellipticity constants, $n, p$ and triple $(\ell, N,C_0)$ of \cref{D:domain}.
	Here $\widetilde{N}_2$ denotes the $L^2$ based nontangential maximal function. When \eqref{E:Rp} holds we say that the equation  \cref{E:pde} has the property $(R)_p$ in $\Omega$.
\end{definition}

Here the use of the $L^2$ based non-tangential maximum function is natural since $\nabla u\in L^2_{loc}(\Omega)$.
In general better smoothness of the gradient cannot be expected unless we assume more smoothness of the coefficients of the parabolic operator.

\begin{remark}
	Some authors \cites{Bro87,Mit01,Nys06,CRS15} also require \newline $\|\widetilde{N}_2(D^t_{1/2} u)\|_{L^p} \lesssim \|f\|_{L^p_{1,1/2}}$ or $\|\widetilde{N}_2(HD^t_{1/2} u)\|_{L^p} \lesssim \|f\|_{L^p_{1,1/2}}$, where $H$ is the Hilbert transform in the time variable. For our result we do not assume this, hence our notion of solvability is slightly weaker than that of the authors above. It follows therefore that the $(R)_p$ solvability in the sense of
	\cites{Bro87,Mit01,Nys06,CRS15} implies solvability in the sense of \cref{D:weak}.
\end{remark}

\begin{definition}
	\label{D:Dp}
	We say the $L^p$ \textit{Dirichlet problem} for the equation  \cref{E:pde} is solvable if the the unique solution $u$ of this equation in $\Omega$ with boundary data $f\in C(\partial\Omega)\cap L^p(\partial\Omega, \dsig)$
	satisfies the following non-tangential maximal function estimate
	\begin{equation}
		\label{E:Dp}
		\|N(u)\|_{L^p(\partial\Omega, \dsig)} \lesssim \|f\|_{L^p(\partial\Omega, \dsig)},
	\end{equation}
	with the implied constant depending only on the ellipticity constants, $n, p$ and triple $(\ell, N,C_0)$ of \cref{D:domain}. When \eqref{E:Dp} holds we say that the equation \cref{E:pde} has the property $(D)_p$ in $\Omega$.
	The property $(D^*)_{p'}$ for the adjoint equation \cref{E:pde:adjoint} is defined analogously and is equivalent to
	solvability of the $L^{p'}$ Dirichlet problem for the equation \eqref{E:pde:adjoint:reflected} in the domain $\tilde\Omega$.
\end{definition}

\begin{remark}
	\label{R:Dp}
	It is well known that the $L^p$ solvability of the Dirichlet problem for some $1< p <\infty$ is equivalent to the parabolic measure $\omega$ belonging to a ``parabolic $A_\infty$" class with respect to the measure $\sigma$ on the surface $\partial\Omega$, \cite{Nys97}*{Theorem 6.2}.
	More specifically, the property $(D)_{p'}$ is equivalent to $\omega \in B_p(\mathrm{d}\sigma)$.
\end{remark}

We now recall the definition of parabolic $A_\infty$ and $B_p$.

\begin{definition}[$A_\infty$ and $B_p$]
	\label{D:Bp}
	Let $\Omega$ be a $\Lip(1,1/2)$ cylinder from \cref{D:domain}.
	For a ball $\Delta_d$ with radius $d\lesssim \sup_{\tau}\diam(\Omega_\tau)$ we denote its corkscrew point by $V_d$.
	We say that the parabolic measure $\omega^{V_d}$ of \cref{E:pde} is in $A_\infty(\Delta_d)$ if for every $\epsilon>0$ there exists $\delta = \delta(\epsilon) > 0$ such that for any ball $\Delta \subset \Delta_d$ and subset $E \subset \Delta$ we have
	\begin{equation}
		\frac{\omega^{V_d}(E)}{\omega^{V_d}(\Delta)} < \delta \Longrightarrow \frac{\sigma(E)}{\sigma(\Delta)} < \epsilon.
	\end{equation}
	The measure $\omega$ is in $A_\infty$ if $\omega^{V_d}$ belongs to $A_\infty(\Delta_d)$ for all $\Delta_d$.
	If $A_\infty$ holds then $\omega^{V_d}$ and $\sigma$ are mutually absolutely continuous and hence one can write $\dw^{V_d}=K^{V_d} \dsig$.

	For $p\in (1,\infty)$ we say that $\omega$ belongs to the \textit{reverse-H\"older class} $B_p(\mathrm{d}\sigma)$ if for all $\Delta_d$ the kernel $K^{V_d}$ satisfies the reverse H\"older inequality
	\begin{equation}
		\label{E:Bp}
		\left(\sigma(\Delta)^{-1}\int_{\Delta} \left(K^{V_d}\right)^p \dsig \right)^{1/p}
		\lesssim \sigma(\Delta)^{-1}\int_{\Delta} K^{V_d} \dsig,
	\end{equation}
	for all balls $\Delta\subset \Delta_d$.
\end{definition}

\begin{remark}
	$A_\infty=\bigcup_{p>1} B_p$.
\end{remark}

\section{Basic Results and Interior Estimates}
\begin{lemma}[Poincar\'{e} type inequality, \cite{Zie89}*{Cor.~4.5.3}]
	\label{L:Poincare}
	If $u \in W^{1,p}(E)$ and $p>1$ then
	\begin{equation}
		\|u\|_{L^{p^*}(E)} \leq C(B_{1,p}(N))^{-1/p} \|Du\|_{L^{p}(E)},
	\end{equation}
	where $B_{\alpha,p}(E)$ is the Bessel capacity of the set $E$
	\footnote{See \cite{Zie89} for a definition of Bessel capacity.},
	$N$ is the set where $u$ vanishes, i.e.\ $N = \{x \st u(x) = 0\}$,
	and $p^* = \frac{np}{n-p}$ if $p < n$, $1 \leq p^* < \infty$ if $p=n$ and $p^* = \infty$ if $p > n$.
\end{lemma}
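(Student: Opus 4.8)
This statement is quoted verbatim from \cite{Zie89}, but let me record the strategy I would use to prove it directly. The plan is to combine the ordinary Sobolev--Poincar\'e inequality on $E$ with the variational characterisation of the Bessel capacity $B_{1,p}$. Concretely, I would take $E$ to be a ball, a cube, or more generally a bounded $W^{1,p}$-extension domain, so that for $u \in W^{1,p}(E)$ one has the Sobolev--Poincar\'e estimate $\|u - u_E\|_{L^{p^*}(E)} \le C\|\nabla u\|_{L^p(E)}$ with $u_E := \fint_E u$ and $p^*$ as in the statement; and I would use that $B_{1,p}(K)$ is comparable to $\inf\{\|w\|_{W^{1,p}}^p : w \ge 1 \text{ q.e.\ on } K\}$, working throughout with the quasicontinuous representative of $u$ so that the vanishing set $N = \{x : u(x) = 0\}$ is well defined up to a set of $B_{1,p}$-capacity zero.

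The main step is then to control the average $u_E$ by the capacity of $N$. Assuming $u_E \ne 0$ (the case $u_E = 0$ being trivial for this estimate), I would test the capacity of $N$ with the function $w := 1 - u/u_E$, which equals $1$ quasi-everywhere on $N$ by construction; this yields $B_{1,p}(N) \le C|u_E|^{-p}\|u_E - u\|_{W^{1,p}(E)}^p$. By the Sobolev--Poincar\'e estimate together with H\"older's inequality one has $\|u_E - u\|_{L^p(E)} \le |E|^{1/p - 1/p^*}\|u_E - u\|_{L^{p^*}(E)} \le C\|\nabla u\|_{L^p(E)}$, hence $\|u_E - u\|_{W^{1,p}(E)} \le C\|\nabla u\|_{L^p(E)}$ and therefore $|u_E| \le C(B_{1,p}(N))^{-1/p}\|\nabla u\|_{L^p(E)}$. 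The triangle inequality $\|u\|_{L^{p^*}(E)} \le \|u - u_E\|_{L^{p^*}(E)} + |u_E|\,|E|^{1/p^*}$ then finishes the estimate, once one observes that $N \subset E$ forces $B_{1,p}(N) \le B_{1,p}(E) < \infty$, so that $1 \lesssim (B_{1,p}(N))^{-1/p}$ and the first term is absorbed into the capacitary one; finally $\|\nabla u\|_{L^p(E)} \le \|Du\|_{L^p(E)}$ gives the stated form.

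I expect the only genuine obstacle to be the bookkeeping around capacity: making ``$w = 1$ on $N$'' rigorous requires replacing $u$ by its quasicontinuous representative and invoking the equivalence between the Bessel capacity $B_{1,p}$ and the associated variational ($W^{1,p}$) capacity, together with the fact that sets of capacity zero are negligible for quasicontinuous functions. All of this is standard potential theory and is developed in \cite{Zie89}; once it is in place the analytic content is just Sobolev--Poincar\'e plus H\"older, as above, so I would simply cite the reference rather than reproduce these foundations.
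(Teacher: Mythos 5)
Your proposal is correct, and it reconstructs the standard capacitary Sobolev--Poincar\'e argument that underlies \cite{Zie89}*{Cor.~4.5.3}: the paper itself gives no proof of this lemma beyond the citation, so there is nothing to diverge from. The one point worth making explicit is that the lemma as quoted is false for arbitrary measurable $E$ (take $E$ disconnected and $u$ locally constant, vanishing on one component), so the hypothesis you supply --- that $E$ is a bounded connected $W^{1,p}$-extension domain --- is genuinely needed and is exactly what is satisfied in the paper's application, where $E$ is a time slice of $T(\Delta_r)$, a bounded Lipschitz domain.
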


In our work we use this for the case where $E$ is a time slice of $T(\Delta_r)$.
\begin{corollary}
	\label{cor:Poincare}
	Let $u \in W^{1,p}(\left.T(\Delta_r)\right| _{t'})$, where $u = 0$ on $\Delta_r|_{t'}$ for some fixed time $t'$.
	Let $p>1$ then there is a constant $C$ independent of $r$ such that
	\begin{equation}
		\|u\|_{L_x^{p}\left(\left.T(\Delta_r)\right| _{t'}\right)} \leq Cr \|\nabla u\|_{L_x^p\left(\left.T(\Delta_r)\right| _{t'}\right)}.
	\end{equation}
\end{corollary}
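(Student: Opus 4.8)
The plan is to deduce the corollary from \cref{L:Poincare} by a scaling argument. Fix the time $t'$ and work in the spatial slice $E := T(\Delta_r)|_{t'} \subset \R^n$, which is a bounded Lipschitz domain (by the remark following \cref{D:domain}, the time slices $\Omega_{t'}$ are bounded Lipschitz domains, and $T(\Delta_r)|_{t'}$ is the intersection with a ball of radius $r$). Applying \cref{L:Poincare} to $u$ on $E$ with $N = \{x : u(x) = 0\} \supset \Delta_r|_{t'}$, and noting $\|Du\|_{L^p(E)} = \|\nabla u\|_{L^p_x(E)}$ since $u$ is a function of the spatial variables only on the slice, we get
\begin{equation*}
	\|u\|_{L^{p^*}_x(E)} \leq C\bigl(B_{1,p}(\Delta_r|_{t'})\bigr)^{-1/p} \|\nabla u\|_{L^p_x(E)}.
\end{equation*}
Then I would pass from $L^{p^*}$ back to $L^p$ on the left using Hölder's inequality, since $E$ has finite measure: $\|u\|_{L^p_x(E)} \leq |E|^{1/p - 1/p^*} \|u\|_{L^{p^*}_x(E)}$.

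The core of the argument is to track the $r$-dependence of the two geometric quantities $|E|$ and $B_{1,p}(\Delta_r|_{t'})$. Here I would use the parabolic/spatial scaling built into \cref{D:domain}: the portion $\Delta_r|_{t'}$ of the boundary at height $r$ is, after the local pullback $\phi_j$ and a dilation by $r$, comparable to a fixed unit-scale boundary patch. Since Bessel capacity $B_{1,p}$ scales like $B_{1,p}(rF) \sim r^{n-p} B_{1,p}(F)$ for $p < n$ (and analogously in the borderline/large-$p$ regimes, where one is more careful), and $|E| = |T(\Delta_r)|_{t'}| \sim r^n$, combining the exponents gives
\begin{equation*}
	|E|^{1/p - 1/p^*} \cdot \bigl(B_{1,p}(\Delta_r|_{t'})\bigr)^{-1/p} \sim r^{n(1/p - 1/p^*)} \cdot r^{-(n-p)/p} = r,
\end{equation*}
after substituting $1/p - 1/p^* = 1/n$ when $p < n$. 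This yields the claimed constant $Cr$ with $C$ independent of $r$, where the uniformity in $r$ comes from the fact that all the rescaled slices have uniformly controlled Lipschitz character $(\ell, N, C_0)$, so the unit-scale capacity $B_{1,p}$ of the rescaled boundary patch is bounded below by a constant depending only on $(\ell, N, C_0), n, p$.

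An alternative, cleaner route I would seriously consider — and probably prefer to present — avoids the Bessel capacity bookkeeping entirely: rescale \emph{first}. Define $\tilde u(y) := u(x_0 + ry)$ on the rescaled domain $\tilde E := r^{-1}(E - x_0)$, which is a Lipschitz domain of unit scale with character $(\ell, N, C_0)$ and whose boundary contains a rescaled copy of $\Delta_r|_{t'}$ of unit size; apply the standard Poincaré inequality (\cref{L:Poincare} at unit scale, where the capacity of the fixed-size vanishing set is just a constant $c(\ell, N, C_0, n, p) > 0$) to get $\|\tilde u\|_{L^p(\tilde E)} \leq C \|\nabla \tilde u\|_{L^p(\tilde E)}$, then undo the scaling: $\|u\|_{L^p_x(E)} = r^{n/p}\|\tilde u\|_{L^p(\tilde E)}$ and $\|\nabla u\|_{L^p_x(E)} = r^{n/p - 1}\|\nabla \tilde u\|_{L^p(\tilde E)}$, so the ratio picks up exactly one factor of $r$. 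The main obstacle in either approach is the same and is essentially bookkeeping: one must be sure the vanishing set $\Delta_r|_{t'}$ is genuinely of size comparable to $r$ (not degenerate) and that its capacity, after rescaling to unit scale, is bounded below uniformly — this is where the Corkscrew/interior-cone properties in \cref{D:domain} enter, guaranteeing the slice boundary patch is non-degenerate with constants depending only on the character of $\Omega$.
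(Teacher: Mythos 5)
Your second (``cleaner'') route --- rescale to unit scale, apply \cref{L:Poincare} plus H\"older there using the positivity of the Bessel capacity of the unit-size vanishing patch, then undo the scaling to pick up one factor of $r$ --- is exactly the paper's proof, and your scaling computation for it is correct. One caution about your first route: the exponent bookkeeping there does not actually close. With $1/p-1/p^*=1/n$ you get $|E|^{1/p-1/p^*}\sim r$, and $\bigl(B_{1,p}(\Delta_r|_{t'})\bigr)^{-1/p}\sim r^{-(n-p)/p}$, whose product is $r^{2-n/p}$, not $r$; the discrepancy reflects the fact that the capacitary Poincar\'e inequality in Ziemer is normalised at unit scale (its constant is not scale-invariant in the form quoted in \cref{L:Poincare}), which is precisely why one should rescale first, as you and the paper both ultimately do.
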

\begin{proof}
	The case for $r = 1$ follows from the positivity of $B_{\alpha,p}\left( \left.\Delta_1 \right| _{t'}\right) $ \cite{Zie89}*{\S 2.6}, \cref{L:Poincare}, and H\"{o}lder's inequality.
	For a general $r$ apply the substitution $v(x) := u(rx)$ then $v \in W^{1,p}\left( \left.T(\Delta_r)\right| _{t'}\right) $ and applying the $r=1$ case and a change of variables gives the general result.
\end{proof}

We now recall some foundational estimates needed to prove the main theorem.

\begin{lemma}[A Cacciopoli inequality, see \cite{Aro68}]
	\label{L:Caccio}
	Let $A$ satisfy \cref{E:elliptic} and suppose that $u$ is a weak solution of \cref{E:pde} or \cref{E:pde:adjoint} in $Q_{4r}(X,t)$ with $0 < r < \delta(X,t)/8$.
	Then there exists a constant $C=C(\lambda,\Lambda, n)$ such that
	\begin{equation*}
		\begin{split}
			r^{n} \left(\sup_{Q_{r/2}(X,t)} u \right)^{2}
			&\leq C \sup_{t-r^2 \leq s \leq t+r^2} \int_{Q_r(X)} u^{2}(Y,s) \dY
			+ C\int_{Q_{r}(X, t)} |\nabla u|^{2} \dY\ds \\
			&\leq \frac{C^2}{r^2} \int_{Q_{2r}(X, t)} u^{2}(Y, s) \dY\ds.
		\end{split}
	\end{equation*}
\end{lemma}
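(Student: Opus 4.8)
The asserted chain of two inequalities really packages together two classical facts: the right-hand inequality is the standard parabolic energy (Caccioppoli) estimate, while the left-hand inequality is a De Giorgi--Nash--Moser local boundedness estimate for (sub)solutions. The plan is to prove each in turn, after first reducing to the normalised scale $r=1$: applying the parabolic dilation $(Y,s)\mapsto\big((Y-X)/r,(s-t)/r^2\big)$ and replacing $A$ by its rescaled version (which still satisfies \cref{E:elliptic} with the same constants) turns $Q_{4r}(X,t)$ into $Q_4(0,0)$, and the various powers of $r$ in the statement are recovered at the end by observing that $Q_\rho$ has $(n+1)$-dimensional space-time measure $\sim\rho^{n+2}$ while a single time slice $Q_\rho(X)$ has measure $\sim\rho^n$. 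For the adjoint equation \cref{E:pde:adjoint} I would simply run time backwards, $s\mapsto -s$, which turns $L^*$ into an operator of the form \cref{E:pde} and only swaps the two time-slabs $\{s\le t\}$ and $\{s\ge t\}$ in the supremum; so it suffices to treat \cref{E:pde}.

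For the energy inequality I would fix a cutoff $\zeta\in C_0^\infty(Q_2)$ with $\zeta\equiv 1$ on $Q_1$, $0\le\zeta\le1$, $|\nabla\zeta|\lesssim1$, $|\partial_t\zeta|\lesssim1$, and test the weak formulation of \cref{D:weak} against $\phi=u\zeta^2$. Since $u_t$ is only a distribution a priori, this pairing has to be justified by a Steklov time-average, after which one passes to the limit; the term with the time derivative produces, after integration by parts in $t$, the good quantity $\sup_s\int u^2\zeta^2\,\dY$ together with an error controlled by $\int u^2\,|\zeta\,\partial_t\zeta|$. Using \cref{E:elliptic} on the diffusion term and Cauchy--Schwarz to absorb the cross term $\int A\nabla u\cdot 2\zeta\nabla\zeta\, u$ into $\varepsilon\int|\nabla u|^2\zeta^2$, one arrives at
\[
\sup_{-1\le s\le 1}\int_{Q_1(0)}u^2(Y,s)\,\dY+\int_{Q_1(0,0)}|\nabla u|^2\,\dY\ds\ \lesssim\ \int_{Q_2(0,0)}\big(|\nabla\zeta|^2+|\zeta\,\partial_t\zeta|\big)u^2\,\dY\ds\ \lesssim\ \int_{Q_2(0,0)}u^2\,\dY\ds,
\]
which is the right-hand inequality after undoing the scaling.

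For the local boundedness (left-hand) inequality the plan is Moser iteration. Because $u$ solves a linear equation, $|u|$ is a weak subsolution, and for every exponent $q\ge1$, testing against $|u|^{2q-2}u\,\zeta^2$ (with a truncation to keep everything integrable) shows that $w:=|u|^q$ obeys an energy inequality of exactly the shape obtained above on any pair of nested cubes $Q_{\rho'}\subset Q_\rho\subset Q_1$, with right-hand side $\lesssim q^2(\rho-\rho')^{-2}\int_{Q_\rho}w^2$. Feeding this into the parabolic Sobolev embedding
\[
\Big(\iint |w|^{2(n+2)/n}\Big)^{n/(n+2)}\ \lesssim\ \Big(\sup_s\int w^2(\cdot,s)\,\dY\Big)^{2/n}\iint|\nabla w|^2\ +\ (\text{lower order}),
\]
valid for $w$ with spatial support in a fixed cube, yields the self-improvement: with $\kappa:=(n+2)/n>1$,
\[
\Big(\fint_{Q_{\rho'}}|u|^{2\kappa q}\Big)^{1/(2\kappa q)}\ \le\ \big(C\,q^2(\rho-\rho')^{-2}\big)^{1/(2q)}\Big(\fint_{Q_\rho}|u|^{2q}\Big)^{1/(2q)}.
\]
Iterating along $q_k=\kappa^k$ and radii $\rho_k\downarrow\tfrac12$ with $\rho_k-\rho_{k+1}\sim2^{-k}$, the product of the constants converges (since $\sum_k\kappa^{-k}\log(C\,\kappa^{2k}4^k)<\infty$), giving $\sup_{Q_{1/2}}|u|\lesssim(\fint_{Q_1}|u|^2)^{1/2}$; bounding $\fint_{Q_1}|u|^2$ by the energy quantity $\sup_{-1\le s\le1}\int_{Q_1}u^2(\cdot,s)\,\dY+\int_{Q_1}|\nabla u|^2$ and undoing the rescaling produces the left-hand inequality with the weight $r^n$.

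The main obstacle is this Moser iteration: one has to derive the energy inequality for the powers $|u|^q$ with the correct polynomial dependence of the constant on $q$, and then chain it with the parabolic Sobolev inequality carefully enough that the resulting infinite product of constants stays finite; the repeated Steklov-averaging needed to legitimately pair with $u_t$ is a recurring technical nuisance rather than a genuine difficulty. All of this is classical and essentially contained in \cite{Aro68}; I would only reproduce it in the normalisation used here.
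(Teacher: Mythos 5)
The paper offers no proof of this lemma --- it is recalled verbatim from the literature (\cite{Aro68}) --- and your reconstruction is exactly the standard argument one would find there: rescale to $r=1$, test against $u\zeta^2$ via Steklov averages to get the energy (right-hand) inequality, and run a Moser iteration on the powers $|u|^q$ for the local boundedness (left-hand) inequality, treating the adjoint equation by time reversal. The only blemish is that the parabolic Sobolev embedding as you wrote it is not dimensionally consistent; the correct form is $\iint|w|^{2(n+2)/n}\lesssim\bigl(\sup_s\int w^2\bigr)^{2/n}\iint\bigl(|\nabla w|^2+w^2\bigr)$ with the whole right-hand side then raised to the power $n/(n+2)$, after which the iteration closes exactly as you describe.
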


Lemmas 3.4 and 3.5 in \cite{HL01} give us the following estimates for weak solutions of \cref{E:pde} or \cref{E:pde:adjoint}.

\begin{lemma}[Interior H\"{o}lder continuity]
	\label{L:int_Holder}
	Let $A$ satisfy \cref{E:elliptic} and suppose that $u$ is a weak solution of \cref{E:pde} or \cref{E:pde:adjoint} in $Q_{4r}(X,t)$ with $0 < r < \delta(X,t)/8$.
	Then for any $(Y,s), (Z,\tau) \in Q_{2r}(X,t)$
	\[
		\left|u(Y, s) - u(Z, \tau)\right| \leq C \left( \frac{\|(Y,s) - (Z,\tau)\|}{r}\right)^{\alpha} \sup_{Q_{4r}(X,t)} |u|,
	\]
	where $C=C(\lambda, \Lambda, n)$, $\alpha=\alpha(\lambda,\Lambda,n)$, and $0 < \alpha < 1$.
\end{lemma}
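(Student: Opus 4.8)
The plan is to run the parabolic De Giorgi--Nash--Moser argument: reduce the Hölder bound to an oscillation--decay estimate on a geometric sequence of sub-cylinders, and then iterate. By the reflection $t\mapsto -t$ used in the introduction, a weak solution of the adjoint equation \cref{E:pde:adjoint} becomes a weak solution of the forward equation \cref{E:pde} on the reflected cylinder, so it suffices to treat \cref{E:pde}. By the parabolic dilation $(Z,\tau)\mapsto (X+rZ,\,t+r^2\tau)$ --- under which \cref{E:elliptic} is preserved with the same $\lambda,\Lambda$ and the parabolic norm, the cubes $Q_\rho$, and $\sup_{Q_{4r}(X,t)}|u|$ all rescale correctly --- we may normalise to $r=1$ and $(X,t)=(0,0)$, so that $u$ is a weak solution in $Q_4(0,0)$. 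Setting $M:=\sup_{Q_4(0,0)}|u|$ (which we may assume finite, else there is nothing to prove), the statement reduces to
\[
	|u(Y,s)-u(Z,\tau)| \lesssim \|(Y,s)-(Z,\tau)\|^{\alpha}\,M \qquad \text{for all }(Y,s),(Z,\tau)\in Q_2(0,0),
\]
and undoing the dilation restores the factor $r^{-\alpha}$.

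The heart of the matter is the oscillation--decay step: there exist $\theta\in(0,1)$ and $\gamma\in(0,1)$, depending only on $\lambda,\Lambda,n$, such that $\operatorname{osc}_{Q_{\theta\rho}} u \le \gamma\,\operatorname{osc}_{Q_{\rho}} u$ for every admissible sub-cylinder $Q_\rho$ inside $Q_4$ (with $Q_{\theta\rho}$ suitably shifted forward in time). To prove it I would fix $Q_\rho$, put $\omega=\operatorname{osc}_{Q_\rho}u$ (finite by the local boundedness contained in \cref{L:Caccio}, applied to interior sub-cylinders), and note that on the lower time-slab of $Q_\rho$ one of the two truncations $v=\sup_{Q_\rho}u-u$ or $v=u-\inf_{Q_\rho}u$ is $\ge \omega/2$ on at least half of the slab. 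Such a $v$ is a non-negative supersolution, so the crucial sublemma applies: a non-negative supersolution in $Q_1$ that exceeds a level $c_0>0$ on a set of proportion $\ge\beta$ in the lower slab is bounded below by $c(\beta,\lambda,\Lambda,n)\,c_0$ throughout a full later sub-cylinder $Q_\theta$. This sublemma is the standard parabolic package: the logarithmic Cacciopoli estimate applied to $\log\bigl(c_0/(v+\varepsilon)\bigr)$ yields a parabolic John--Nirenberg (BMO-type) bound, hence an $L^1$-to-pointwise lower estimate, which is then upgraded to a genuine pointwise lower bound by a De Giorgi iteration on the truncations of $v$ from below along a decreasing sequence of levels. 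Feeding this back, the chosen truncation drops by a fixed factor on $Q_\theta$, i.e.\ $\operatorname{osc}_{Q_\theta}u \le (1-c/2)\,\omega$, so one may take $\gamma=1-c/2$.

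Iterating the oscillation--decay estimate over the cylinders $Q_{\theta^k}(0,0)$, nested with the forward-in-time shifts, gives $\operatorname{osc}_{Q_{\theta^k}(0,0)}u \le \gamma^{k}\operatorname{osc}_{Q_1(0,0)}u \le 2\gamma^{k}M$. Choosing $\alpha\in(0,1)$ with $\gamma=\theta^{\alpha}$ and then chaining across scales --- comparing $\|(Y,s)-(Z,\tau)\|$ with the smallest $\theta^{k}$ exceeding it --- produces the displayed Hölder bound on $Q_2(0,0)$; reversing the normalisation yields the lemma with $C$ and $\alpha$ depending only on $\lambda,\Lambda,n$.

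The main obstacle is the oscillation--decay step, and within it the \emph{time lag} intrinsic to parabolic equations: unlike the elliptic Harnack inequality, positivity of a supersolution on a set in one time-slab propagates only to \emph{later} times, so the weak Harnack sublemma and the De Giorgi density lemmas must be set up on carefully chosen sub-slabs with a built-in waiting time, and the logarithmic estimate must be exploited in its one-sided-in-time parabolic form. Arranging this geometry uniformly in the scale $\rho$ --- so that $\theta$, $\gamma$, and hence $\alpha$, depend only on $\lambda,\Lambda,n$ --- is the delicate bookkeeping. Everything else is the Cacciopoli / parabolic-Sobolev / iteration machinery, which uses only the ellipticity bounds \cref{E:elliptic} and is therefore insensitive to the roughness of $A$.
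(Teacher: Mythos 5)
The paper does not prove this lemma itself; it simply cites Lemmas 3.4 and 3.5 of \cite{HL01}, which in turn rest on the classical De Giorgi--Nash--Moser theory. Your outline is exactly that classical argument (reflection to reduce the adjoint case to \cref{E:pde}, parabolic rescaling, oscillation decay via a measure-to-pointwise lower bound for non-negative supersolutions with the forward-in-time lag, then iteration across scales), all ingredients of which use only \cref{E:elliptic}; it is correct and consistent with the source the paper relies on.
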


\begin{lemma}[Harnack inequality]
	\label{L:Harnack}
	Let $A$ satisfy \cref{E:elliptic} and suppose that $u$ is a weak non-negative solution of \cref{E:pde} in $Q_{4r}(X,t)$, with $0 < r < \delta(X,t)/8$.
	Suppose that $(Y,s), (Z,\tau) \in Q_{2r}(X,t)$ then there exists $C=C(\lambda, \Lambda, n)$ such that, for $\tau < s$,
	\[
		u(Z, \tau) \leq u(Y, s) \exp \left[ C\left( \frac{|Y-Z|^2}{|s-\tau|} + 1 \right) \right].
	\]
	If $u \geq 0 $ is a weak solution of \cref{E:pde:adjoint} then this inequality holds when $\tau > s$.
\end{lemma}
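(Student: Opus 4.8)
The plan is to deduce this from the standard local parabolic Harnack inequality (of Moser--Aronson type, see \cite{Aro68}) by a Harnack chain argument, and to obtain the adjoint case by the time reversal $v(X,t)=u(X,-t)$. Recall the local statement: there is $C_0=C_0(\lambda,\Lambda,n)$ so that if $w\ge 0$ is a weak solution of \cref{E:pde} in a cylinder $B_{2\rho}(x_0)\times(t_0-4\rho^2,t_0]$ (with $B_\rho(x_0)\subset\R^n$ a spatial ball) and we set $Q^-=B_\rho(x_0)\times(t_0-3\rho^2,t_0-2\rho^2]$, $Q^+=B_\rho(x_0)\times(t_0-\rho^2,t_0]$, then
\[
\sup_{Q^-}w\ \le\ C_0\,\inf_{Q^+}w .
\]
Since $u$ is continuous by \cref{L:int_Holder}, all suprema and infima below are genuine pointwise values. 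The essential point is that $Q^-$ lies strictly earlier in time than $Q^+$, so one step advances time by a definite amount; iterating this is precisely what produces the exponential factor.

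Next comes the chain. Given $(Y,s),(Z,\tau)\in Q_{2r}(X,t)$ with $\tau<s$, interpolate linearly: for $0\le k\le m$ set
\[
(W_k,\sigma_k)=\Big(\big(1-\tfrac{k}{m}\big)Z+\tfrac{k}{m}\,Y,\ \ \tau+\tfrac{k}{m}(s-\tau)\Big),
\]
so the times $\sigma_k$ strictly increase with $\sigma_{k+1}-\sigma_k=(s-\tau)/m$ and $|W_{k+1}-W_k|=|Y-Z|/m$. For each $k$ I apply the local Harnack inequality on a cylinder of spatial radius $\rho_k$ with $\rho_k^2\sim(s-\tau)/m$, essentially centred at $(W_{k+1},\sigma_{k+1})$; this is admissible once $\rho_k\gtrsim|W_{k+1}-W_k|$, i.e. once $(s-\tau)/m\gtrsim(|Y-Z|/m)^2$, which holds for
\[
m\ \sim\ \max\Big\{1,\ \frac{|Y-Z|^2}{s-\tau}\Big\}.
\]
Since $|s-\tau|\lesssim r^2$ automatically for two points of $Q_{2r}(X,t)$, a further bounded increase of $m$ (hence decrease of the $\rho_k$) makes every cylinder in the chain fit inside $Q_{4r}(X,t)$, where $u$ solves the equation; this is the only place the hypotheses $(Y,s),(Z,\tau)\in Q_{2r}(X,t)$ and $0<r<\delta(X,t)/8$ enter. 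Each step gives $u(W_k,\sigma_k)\le C_0\,u(W_{k+1},\sigma_{k+1})$, so multiplying the $m$ inequalities,
\[
u(Z,\tau)\ \le\ C_0^{\,m}\,u(Y,s)\ =\ u(Y,s)\,\exp\!\big(m\log C_0\big)\ \le\ u(Y,s)\,\exp\!\Big[C\Big(\frac{|Y-Z|^2}{|s-\tau|}+1\Big)\Big],
\]
which is the asserted inequality for solutions of \cref{E:pde}.

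For the adjoint equation I set $v(X,t)=u(X,-t)$ and $\tilde A(X,t)=A(X,-t)$; as recorded in \eqref{E:pde:adjoint:reflected}, $u\ge 0$ is a weak solution of \cref{E:pde:adjoint} in $Q_{4r}(X,t)$ exactly when $v\ge 0$ solves $\di(\tilde A^*\nabla v)-v_t=0$ in the time-reflected cylinder $Q_{4r}(X,-t)$, and $\tilde A^*$ still satisfies \cref{E:elliptic} with the same constants. Applying the part already proved to $v$ at a pair of points whose $v$-times increase, and undoing the substitution, converts ``earlier time $\le$ later time'' for $v$ into the inequality for $u$ valid when $\tau>s$, with the same constant $C$.

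The one genuinely hard ingredient is the local parabolic Harnack inequality itself — Moser's iteration together with a parabolic John--Nirenberg argument — which I quote from \cite{Aro68}; note also that the very appearance of this estimate (rather than a global one) in Lemmas~3.4--3.5 of \cite{HL01} is exactly because solutions of nonsmooth-coefficient parabolic equations need not satisfy a stronger Harnack bound. Granting the local inequality, the remaining work is routine bookkeeping: the only care needed is the choice of $m$ and the radii $\rho_k$ so that the whole Harnack chain remains inside $Q_{4r}(X,t)$.
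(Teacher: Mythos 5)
Your proposal is correct, but note that the paper offers no proof of this lemma: it is quoted verbatim from \cite{HL01}*{Lemmas 3.4--3.5}, which in turn rest on the classical Moser--Aronson local Harnack inequality. Your argument --- the local Harnack estimate from \cite{Aro68} combined with a Harnack chain of $m\sim\max\{1,|Y-Z|^2/(s-\tau)\}$ steps to produce the exponential factor, containment in $Q_{4r}(X,t)$ by a bounded increase of $m$, and time reversal for the adjoint case --- is the standard and correct derivation of the stated pointwise form, so it matches the intended source argument rather than departing from it.
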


We state a version of the maximum principle from \cite{DH16} that is a modification of Lemma 3.38 from \cite{HL01}.
\begin{lemma}[Maximum Principle]
	\label{L:MP}
	Let $A$ satisfy \cref{E:elliptic}, $\Omega$ be a $\Lip(1,1/2)$ cylinder and let $u$, $v$ be bounded continuous weak solutions to \cref{E:pde} in $\Omega$.
	If $|u|,|v|\to 0$ uniformly as $t \to -\infty$ and
	\[
		\limsup_{(Y,s)\to (X,t)} (u-v)(Y,s) \leq 0
	\]
	for all  $(X,t)\in\partial\Omega$, then $u\leq v$ in $\Omega$.
\end{lemma}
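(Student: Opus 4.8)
The plan is to reduce to a single function by linearity and then run a parabolic $L^2$ (energy) argument, exploiting that the time slices $\Omega_\tau$ have uniformly bounded diameter so that the portion of the ``parabolic boundary'' near $t=-\infty$ is controlled entirely by the uniform decay hypothesis.

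First I would set $w:=u-v$. By \cref{D:weak} and linearity of \cref{E:pde}, $w$ is a bounded continuous weak solution of $w_t=\di(A\nabla w)$ in $\Omega$ with $w\to 0$ uniformly as $t\to-\infty$ and $\limsup_{(Y,s)\to(X,t)}w(Y,s)\le 0$ for every $(X,t)\in\partial\Omega$. It suffices to prove $w\le\varepsilon$ in $\Omega$ for each fixed $\varepsilon>0$ and then let $\varepsilon\downarrow 0$, so I fix $\varepsilon>0$ and set $w_\varepsilon:=(w-\varepsilon)^+$. Two support localisations are the geometric heart of the argument: (i) by uniform decay there is $T_\varepsilon\in\R$ with $|w|<\varepsilon$, hence $w_\varepsilon\equiv 0$, on $\Omega\cap\{t\le T_\varepsilon\}$; and (ii) for any $T>T_\varepsilon$ the set $\partial\Omega\cap\{T_\varepsilon\le t\le T\}$ is compact — it is bounded in $t$, and bounded in the spatial variables because the slices $\Omega_\tau$ have uniformly bounded diameter (Remark following \cref{D:domain}) — so the pointwise $\limsup$ condition together with a finite subcover produces a neighbourhood of this set on which $w<\varepsilon$. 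Consequently $\supp w_\varepsilon\cap\{t\le T\}$ is a compact subset of the open set $\Omega$; in particular $\nabla w_\varepsilon=\chi_{\{w>\varepsilon\}}\nabla w\in L^2$ there, and $w_\varepsilon$ multiplied by a smooth temporal cut-off is an admissible test function in the sense of \cref{D:weak}.

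Next I would carry out the standard energy computation, made rigorous via Steklov averages $w^h(X,t):=h^{-1}\int_t^{t+h}w(X,s)\ds$. For $t_1<t_2$, test the Steklov-averaged equation against $(w^h-\varepsilon)^+$ times a cut-off in time equal to $1$ on $(t_1,t_2)$ and supported in a slightly larger interval, and let $h\to 0$. In the time term, the contribution where the time derivative falls on the cut-off is $\tfrac12 w_\varepsilon^2+\varepsilon w_\varepsilon-w\,w_\varepsilon=-\tfrac12 w_\varepsilon^2$ on $\{w>\varepsilon\}$ and $0$ elsewhere, so this term converges to $\tfrac12\int_{\Omega_{t_2}}w_\varepsilon^2\dX-\tfrac12\int_{\Omega_{t_1}}w_\varepsilon^2\dX$, while the elliptic term contributes $\iint_{t_1<t<t_2}A\nabla w_\varepsilon\cdot\nabla w_\varepsilon\ge\lambda\iint|\nabla w_\varepsilon|^2\ge 0$ by \cref{E:elliptic}. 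Hence $t\mapsto E_\varepsilon(t):=\int_{\Omega_t}w_\varepsilon^2\dX$ is non-increasing; letting $t_1\to-\infty$ and invoking (i) gives $E_\varepsilon(t_1)=0$, so $E_\varepsilon\equiv 0$, whence $w_\varepsilon=0$ a.e.\ and, by continuity, $w\le\varepsilon$ throughout $\Omega$. Letting $\varepsilon\downarrow 0$ yields $w\le 0$, i.e.\ $u\le v$.

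I expect the only genuine obstacle to be the rigorous justification of this energy identity for weak solutions with merely bounded measurable coefficients and no a priori control on $\partial_t w$: the Steklov-averaging scheme, the chain rule $\partial_t(w^h-\varepsilon)^+=\chi_{\{w^h>\varepsilon\}}\partial_t w^h$, and the admissibility of $w_\varepsilon$ as a test function all rest on the support localisation of step (ii), which is precisely where the $\limsup$ boundary hypothesis, the compactness of the lateral boundary slices, and the uniform decay at $t=-\infty$ enter. Everything else — the linearity reduction, the algebraic cancellation $\tfrac12 w_\varepsilon^2+\varepsilon w_\varepsilon-w\,w_\varepsilon=-\tfrac12 w_\varepsilon^2$, monotonicity of $E_\varepsilon$, and the limits $t_1\to-\infty$ and $\varepsilon\downarrow 0$ — is routine. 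A comparison argument against the Perron/parabolic-measure solution from \cref{E:pm} is possible as an alternative, but the energy route is more self-contained and uses nothing beyond the hypotheses already in place.
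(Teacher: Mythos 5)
Your argument is correct, but it is worth noting that the paper does not actually prove \cref{L:MP}: it imports the statement from \cite{DH16} as a modification of Lemma 3.38 of \cite{HL01}, and \cref{R:max} merely records that the cited proof goes through once one adds the hypothesis $|u|,|v|\to 0$ uniformly as $t\to-\infty$. Your Steklov-averaged energy argument is therefore a genuinely self-contained alternative, and it is sound: the reduction to $w=u-v$, the truncation $w_\varepsilon=(w-\varepsilon)^+$, the identification of the algebraic cancellation $\tfrac12 w_\varepsilon^2+\varepsilon w_\varepsilon-ww_\varepsilon=-\tfrac12 w_\varepsilon^2$, the monotonicity of $E_\varepsilon(t)=\int_{\Omega_t}w_\varepsilon^2\dX$ from \cref{E:elliptic}, and the vanishing of $E_\varepsilon$ at $t=T_\varepsilon$ all fit together; crucially, you correctly isolated that the only place the boundary hypotheses and the geometry of the domain enter is in showing that $\supp w_\varepsilon\cap\{t\le T\}$ is a compact subset of $\Omega$, which is exactly what licenses $w_\varepsilon$ (times a temporal cut-off, after Steklov averaging) as a test function in the sense of \cref{D:weak}. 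What your route buys is independence from the pullback and barrier machinery of \cite{HL01}, using only ellipticity and the weak formulation; what it costs is the technical overhead of justifying the energy identity for merely bounded measurable coefficients, which you rightly flag as the delicate step. One small point to tighten: the spatial boundedness of $\partial\Omega\cap\{T_\varepsilon\le t\le T\}$ does not follow from the uniform bound on $\diam(\Omega_\tau)$ alone (the slices could a priori drift to spatial infinity as $\tau$ varies); you also need that the boundary moves continuously in time, which is supplied by the $\Lip_{1/2}$ bound \cref{E:L1} on the graphs $\phi_j$ together with a finite covering of $[T_\varepsilon,T]$ by time intervals of length $\sim d^2$ as in \cref{D:domain}. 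With that observation added, the compactness claim, and hence the whole proof, is complete.
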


\begin{remark}[\cite{DH16}]
	\label{R:max}
	The proof of Lemma 3.38 from \cite{HL01} works given the assumption that $|u|,|v|\to 0$ uniformly as $t\to-\infty$.
	Even with this additional assumption,  the lemma as stated is sufficient for our purposes.
	We shall mostly use it when $u\le v$ on the boundary of $\Omega\cap\{t\ge \tau\}$ for a given time $\tau$.
	Obviously then the assumption that $|u|,|v|\to 0$ uniformly as $t\to-\infty$ is not necessary.
	Another case when the Lemma as stated here applies is when $u|_{\partial\Omega},v|_{\partial\Omega}\in C_0(\partial\Omega)$, where $C_0(\partial\Omega)$ denotes the class of continuous functions decaying to zero as $t\to\pm\infty$.
	This class is dense in any $L^p(\partial\Omega,d\sigma)$, $p<\infty$ allowing us to consider an extension of the solution operator from $C_0(\partial\Omega)$ to $L^p$.
\end{remark}

The following Carleson type estimate was proved for Lipschitz cylinders in \cite{Sal81} and extended to $\Lip(1,1/2)$ cylinders in \cite{Nys97}*{Lemma 2.4}.

\begin{lemma}[Carleson type estimate, \cite{Nys97}]
	\label{L:carl}
	Let $\Omega$ be a $\Lip(1,1/2)$ cylinder from \cref{D:domain} with character $(\ell,N,C_0)$ and $A$ satisfy \cref{E:elliptic}.
	Let	$u$ be a non-negative weak solution of \cref{E:pde} or the adjoint \cref{E:pde:adjoint} in $\Psi_{2r}(y,s)$ for $(y,s) \in \partial\Omega$ and $0 < r < r_0/2$.
	Let $u$ vanish continuously on $\Psi_{2r}(y,s)\cap \partial\Omega$, then there exists $C = C(\ell, \lambda, \Lambda, n)$ such that for $(X,t) \in \Psi_r(y,s)$
	\begin{equation}
		\label{E:carl}
		u(X,t) \leq C u(V^\pm_r),
	\end{equation}
	where the plus sign is taken when $u$ is a weak solution of \cref{E:pde} and the minus sign is taken when $u$ is a weak solution of the adjoint \cref{E:pde:adjoint}. Here $V^+_r$ is the usual (forward in time) corkscrew point of $\Delta_r(y,s)$, while $V^-_r$ is backward-time corkscrew point $\Delta_r(y,s)$ (i.e.\ a point at time $s-2r^2$).
\end{lemma}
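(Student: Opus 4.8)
\textit{Proof strategy.} The plan is to reduce to the forward equation on a unit scale, turn the qualitative hypothesis ``$u$ vanishes on $\Psi_{2r}\cap\partial\Omega$'' into a quantitative H\"older decay of $u$ near the boundary, and then run a near-maximum argument in which every Harnack chain is forced to advance in time. First I would dispose of the adjoint ($-$) case: under the time reflection $(X,t)\mapsto(X,-t)$ of the introduction, a non-negative solution of \eqref{E:pde:adjoint} on $\Omega$ becomes a non-negative solution of the forward equation on the $\Lip(1,1/2)$ cylinder $\tilde\Omega$ with the same character, and $V^-_r$ becomes the usual forward corkscrew; so it suffices to treat non-negative weak solutions $u$ of \eqref{E:pde}. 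Rescaling parabolically about $(y,s)$ and translating, I may then assume $(y,s)=(0,0)$ and $r=1$, since \eqref{E:elliptic} and the character $(\ell,N,C_0)$ are invariant under parabolic dilation and $u$ remains a non-negative weak solution of an equation of the same class in $\Psi_2(0,0)$ that vanishes continuously on $\Psi_2(0,0)\cap\partial\Omega$, with $V^+_1$ sitting at time $t=2$; the bound $r<r_0/2$ guarantees that after rescaling the whole of $\Psi_2$ is still described by the local graphs of \cref{D:domain}.

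The first ingredient is a boundary H\"older decay. I extend $u$ by zero, $\tilde u:=u$ on $\Omega$ and $\tilde u:=0$ on $\Psi_2\setminus\Omega$ (extending $A$ so that \eqref{E:elliptic} persists). Since $u\ge0$ and $u$ vanishes continuously on $\Psi_2\cap\partial\Omega$, $\tilde u$ is a non-negative weak subsolution of $w_t=\di(A\nabla w)$ in $\Psi_2$: integrating by parts against a non-negative test function produces a boundary term $\int_{\partial\Omega}(A\nabla u\cdot\nu)\,\varphi$ with $\nu$ the outer normal of $\Omega$, which is $\le 0$ because $u$ attains its minimum along $\partial\Omega$. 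Because every time-slice $\Omega_\tau$ is a Lipschitz domain with fixed character, $\Omega^{c}$ enjoys an exterior corkscrew condition, so $\Psi_\rho(0,0)\setminus\Omega$ has parabolic volume $\gtrsim\rho^{n+2}$ for $0<\rho\le2$. The parabolic De Giorgi--Nash--Moser oscillation estimate for non-negative subsolutions that vanish on a set of positive parabolic density then yields, with some $\alpha=\alpha(\ell,\lambda,\Lambda,n)\in(0,1)$,
\begin{equation*}
	u(X,t)=\tilde u(X,t)\le C\,\delta(X,t)^{\alpha}\sup_{\Psi_{3/2}(0,0)\cap\Omega}u,\qquad (X,t)\in\Psi_1(0,0)\cap\Omega,
\end{equation*}
and the same estimate with $\Psi_1,\Psi_{3/2}$ replaced by concentric parabolic balls $\Psi_\rho(\hat X,\hat t)\subset\Psi_{2\rho}(\hat X,\hat t)\subset\Psi_2(0,0)$ centred at boundary points $(\hat X,\hat t)\in\partial\Omega$. (Alternatively this boundary H\"older bound may be quoted directly from \cite{Nys97}.)

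The second ingredient is Harnack chains that run forward in time. Fix a small $\eta>0$. If $(X,t)\in\Psi_1\cap\Omega$ has $\delta(X,t)\ge\eta$, then $(X,t)$ can be joined to $V^+_1$ by a chain of overlapping parabolic balls of cardinality bounded in terms of $\eta$ and $\ell$ whose time-coordinate is non-decreasing, so \cref{L:Harnack} gives $u(X,t)\le C_\eta\,u(V^+_1)$; this is possible precisely because $V^+_1$ sits at time $2$, strictly later than every time occurring in $\Psi_1$, has $\delta(V^+_1)\sim1$, and $\Psi_2\cap\Omega$ is connected at scale $1$ (each $\Omega_\tau$ being Lipschitz). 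If instead $\delta(X,t)<\eta$, the boundary decay bounds $u(X,t)$ by $C\eta^{\alpha}M$, where $M:=\sup\{u(Z,\tau):(Z,\tau)\in\Psi_{3/2}\cap\Omega,\ \tau<2\}$; so it remains to prove $M\le Cu(V^+_1)$. I would do this with a near-maximum argument: choosing $(X^{*},t^{*})$ with $u(X^{*},t^{*})\ge M/2$, we have $t^{*}<2$, so if $\delta(X^{*},t^{*})$ is bounded below a forward Harnack chain gives $M\le Cu(V^+_1)$, while if $\delta(X^{*},t^{*})$ is very small the boundary decay bounds $u(X^{*},t^{*})$ by a small multiple of the supremum of $u$ over a slightly larger region --- and, choosing the auxiliary regions from a suitable finite nested family so that this larger region is the next one up and the time stays below $2$, this contradicts near-maximality once the localization parameter is small. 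Hence the near-maximizer is interior, $M\le Cu(V^+_1)$, and combining the two regimes with the maximum principle \cref{L:MP} gives \eqref{E:carl} (with the $+$ sign) on $\Psi_1$; undoing the scaling and the time reflection completes the proof.

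The hard part is exactly this bookkeeping, which is dictated by the \emph{one-sided} nature of parabolic Harnack: a chain of balls for $L$ may only move forward in time (and backward for $L^{*}$). This is why the corkscrew must be taken at the later time $s+2r^{2}$, and why every auxiliary supremum in the near-maximum step must be over a region whose time is capped below $s+2r^{2}$ while still containing all the auxiliary balls generated by the decay estimate --- which is arranged by a finite chain of nested regions exploiting the buffer between $\Psi_r$ (where the conclusion is claimed) and $\Psi_{2r}$ (where $u$ is assumed to solve the equation and vanish). This careful implementation is the content of \cite{Sal81} and \cite{Nys97}; all other inputs are the standard interior estimates \cref{L:Caccio,L:int_Holder,L:Harnack} and the maximum principle \cref{L:MP}.
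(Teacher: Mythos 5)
The paper does not prove \cref{L:carl}: it is imported from \cite{Sal81} (Lipschitz cylinders) and \cite{Nys97}*{Lemma 2.4} ($\Lip(1,1/2)$ cylinders), so the only question is whether your sketch is a sound reconstruction of that standard argument. Your overall strategy --- reduce to the forward equation by time reflection, boundary H\"older decay via the zero extension, forward-in-time Harnack chains to the later corkscrew point --- is the right one, but the final step has a genuine gap, in two places. (a) Time bookkeeping: capping the auxiliary supremum at $\tau<s+2r^2$ is not sufficient. A near-maximiser at time $s+2r^2-\epsilon$ and at spatial distance $\sim r$ from $V^+_r$ cannot be joined to $V^+_r$ by a forward Harnack chain with a uniform constant, because \cref{L:Harnack} costs $\exp\bigl(C|Y-Z|^2/|s-\tau|\bigr)$; you need every auxiliary region to sit at times $\le s+(2-c)r^2$ for a fixed $c>0$. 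Since the enlargements generated by the decay estimate have size $O(\delta(X,t))=O(\eta r)$, keeping everything inside, say, $\Psi_{5r/4}(y,s)$ (times up to $s+25r^2/16$) fixes this; $\Psi_{3r/2}$ together with the cap $\tau<s+2r^2$ does not.

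(b) More seriously, the ``near-maximum over a finite nested family'' does not close as described. If at each stage the near-maximiser of $M_k=\sup_{\Psi_{\rho_k}\cap\Omega}u$ happens to lie within $\eta r$ of the boundary, you obtain $M_1\le(C\eta^{\alpha})^{K}M_{K+1}$ and then run out of regions with no bound on $M_{K+1}$ in terms of $u(V^+_r)$; no contradiction with near-maximality arises and no estimate results. The standard closure (Caffarelli--Fabes--Mortola--Salsa, adapted by Salsa and Nystr\"om) is different: first upgrade the Harnack-chain step to the quantitative form $u(X,t)\le C\left(r/\delta(X,t)\right)^{N_0}u(V^+_r)$ throughout the working region; then suppose $u(X_0,t_0)\ge N\,u(V^+_r)$ for some $(X_0,t_0)\in\Psi_r$, deduce $\delta(X_0,t_0)\le CN^{-1/N_0}r$, and use the boundary decay to produce $(X_1,t_1)$ with $u(X_1,t_1)\ge 2u(X_0,t_0)$ and $\dist[(X_1,t_1),(X_0,t_0)]\lesssim N^{-\beta}r$. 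Iterating gives a sequence with $u(X_k,t_k)\to\infty$ converging to a point of $\partial\Omega\cap\Psi_{3r/2}$ where $u$ vanishes continuously --- a contradiction for $N$ large. A minor further point: for bounded measurable $A$ the zero extension of $u$ is indeed a subsolution, but this cannot be justified by a conormal boundary term $\int_{\partial\Omega}(A\nabla u\cdot\nu)\varphi$, which has no meaning at this regularity; one tests against cutoffs supported away from $\partial\Omega$ and uses $u\ge0$ together with the continuous vanishing.
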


\begin{lemma}[Parabolic doubling, corkscrew point, see \cite{Nys97} for more general statements in time-varying domains]
	\label{L:doubling}
	Let $\Omega$ be a $\Lip(1,1/2)$ cylinder from \cref{D:domain} with character $(\ell,N,C_0)$.
	Let $\Delta_{2 r} \subset \Delta_d$ be boundary balls, and $V_{2r}$ and $V_d$ be their corkscrew points. Let $A$ satisfy \cref{E:elliptic} and $\omega^{V_d}$ be the parabolic measure of \cref{E:pde}.
	Then there exists $C = C(\lambda, \Lambda, n, \ell)$ such that
	\begin{enumerate}
		\item $\omega^{V_d} (\Delta_d) \geq C$

		\item $\omega^{V_d} (\Delta_{2 r}) \leq C \omega^{V_d}(\Delta_r) \quad$ (doubling)

		\item If $E \subset \Delta_{2 r}$ is a Borel set then
		      \[
			      \omega^{V_{2r}} (E) \sim  \frac{\omega^{V_d}(E)}{\omega^{V_d} (\Delta_{2 r})}.
		      \]
	\end{enumerate}
\end{lemma}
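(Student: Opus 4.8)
The plan is to prove the three assertions in cascade, using the interior estimates of this section together with the Carleson estimate \cref{L:carl}, Harnack's inequality \cref{L:Harnack} and the maximum principle \cref{L:MP}; the technical core is the content of \cite{Nys97}, and I outline its structure. Throughout fix the centre $(y,s)$ of $\Delta_d$, write $\Delta_\rho=\Delta_\rho(y,s)$, and let $V_\rho=V_\rho(y,s)$ be the corresponding forward corkscrew point, which sits at time $s+2\rho^2$; all constants depend only on $(\lambda,\Lambda,n,\ell)$. Since $\Delta_{2r}\subset\Delta_d$ forces $2r\le d$, the points $V_r$, $V_{2r}$ and $V_d$ occur at non-decreasing times, which will be used repeatedly.

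Part (1) is the parabolic analogue of Bourgain's non-degeneracy estimate. The function $u(X,t):=\omega^{(X,t)}(\Delta_d)$ is a nonnegative weak solution of \cref{E:pde} with $u\le1$, and $1-u(X,t)=\omega^{(X,t)}(\partial\Omega\setminus\Delta_d)$ vanishes continuously on the relative interior of $\Delta_d$. A $\Lip(1,1/2)$ cylinder satisfies an exterior corkscrew condition at $\Delta_d$: below the local graph of $\partial\Omega$ there is a point $(X^*,t^*)\notin\overline\Omega$ at parabolic distance $\sim d$ from both $(y,s)$ and $\partial\Omega$. Bounding $u$ from below by an explicit subsolution supported near the exterior corkscrew ball, via \cref{L:MP} on the slab $\Omega\cap\{t>s-Cd^2\}$, gives $\omega^{V_d}(\Delta_d)=u(V_d)\ge c$.

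Parts (2) and (3) come from one boundary-comparison scheme for nonnegative solutions vanishing continuously on the common boundary piece $\partial\Omega\setminus\overline{\Delta_{2r}}$. For doubling: near $V_r$ we have $\omega^{(\cdot)}(\Delta_r)\ge c$ (Part (1) at scale $r$, and \cref{L:Harnack}) while $\omega^{(\cdot)}(\Delta_{2r})\le1$, so the ratio $\omega^{(\cdot)}(\Delta_{2r})/\omega^{(\cdot)}(\Delta_r)\le 1/c$ there; one then propagates this ratio bound forward in time to $V_d$ by covering $\partial\Omega$ away from $\overline{\Delta_{2r}}$ with boundary boxes on which \cref{L:carl} applies to each of the two solutions, transporting the resulting bounds to scale-$r$ corkscrew points along short Harnack chains, and invoking \cref{L:MP}; evaluating at $V_d$ yields $\omega^{V_d}(\Delta_{2r})\le C\,\omega^{V_d}(\Delta_r)$. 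Part (3) is the same scheme with $\Delta_r$ replaced by a Borel set $E\subset\Delta_{2r}$: the ratio $\omega^{(\cdot)}(E)/\omega^{(\cdot)}(\Delta_{2r})$ is propagated between $V_{2r}$ (time $s+8r^2$) and $V_d$ (time $s+2d^2\ge s+8r^2$), and since $\omega^{V_{2r}}(\Delta_{2r})\sim1$ by Part (1) this gives $\omega^{V_{2r}}(E)\sim\omega^{V_d}(E)/\omega^{V_d}(\Delta_{2r})$. Scale-invariance of \cref{L:carl} keeps all constants uniform even when $d/r$ is large.

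The main obstacle throughout is the parabolic forward/backward-in-time asymmetry: Harnack chains may be traversed only with increasing time, \cref{L:carl} must be invoked with the correct forward or backward corkscrew point, and $\omega^{(X,t)}(\Delta_\rho)$ genuinely vanishes at points $(X,t)$ at times before $\Delta_\rho$, so the elliptic comparison arguments do not transfer verbatim. Making the covering, Harnack-chain and maximum-principle steps interlock while respecting the time direction is the delicate bookkeeping---this is what the placement of corkscrew points at $t=s+2r^2$ in \cref{D:domain}, and the arguments of \cite{Nys97}, are designed to handle.
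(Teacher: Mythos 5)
The paper offers no proof of \cref{L:doubling}; it is imported wholesale from \cite{Nys97} (building on \cite{FGS86} and \cite{FS97} for Lipschitz cylinders), so there is no internal argument to compare yours against. Judged on its own terms, your sketch of Part (1) is sound: either the exterior-corkscrew barrier you describe or, more in the spirit of this paper, applying \cref{L:carl} to $1-u=\omega^{(\cdot)}(\partial\Omega\setminus\Delta_d)$ to force $u\ge 1/2$ at an interior point near the centre of $\Delta_d$ and then running a forward Harnack chain to $V_d$, gives the lower bound.

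Parts (2) and (3), however, have a genuine gap that your closing paragraph gestures at but does not resolve. The scheme ``bound the ratio $\omega^{(\cdot)}(\Delta_{2r})/\omega^{(\cdot)}(\Delta_r)$ near the scale-$r$ corkscrew point and propagate it to $V_d$ by the maximum principle'' requires the ratio bound on the \emph{entire} parabolic boundary of the comparison region $\Omega\setminus\overline{T(\Delta_{2r})}$, and this fails on its early-time portion: $\omega^{(X,t)}(\Delta_r(y,s))$ vanishes identically for $t<s-r^2$, while $\omega^{(X,t)}(\Delta_{2r}(y,s))$ is strictly positive for $s-4r^2<t<s-r^2$, so the ratio is infinite there and no amount of covering by boxes for \cref{L:carl} or forward Harnack chaining (which only transports lower bounds forward in time) repairs this. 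The missing ingredient is the backward-in-time Harnack inequality for nonnegative solutions vanishing on a boundary portion --- equivalently, the comparability $G(X,t,Y,s)\sim G(X,t',Y,s)$ of the Green's function across times, or the time-independence of the kernel function --- which is precisely the hard theorem of \cite{FGS86}, \cite{FS97} and \cite{Nys97} and is not a consequence of \cref{L:carl}, \cref{L:Harnack} and \cref{L:MP} alone. Calling this ``delicate bookkeeping'' understates it: without the backward Harnack principle the doubling inequality (2) and the upper half of the comparison (3) do not follow from the tools listed in this section, which is exactly why the paper cites \cite{Nys97} rather than proving the lemma.
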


The next lemma shows that the parabolic measure of different corkscrew points of large balls are comparable.
\begin{lemma}[Change of corkscrew point]
	\label{L:changeCork}
	Let $\Omega$ be a $Lip(1,1/2)$ cylinder.
	Let $\Delta_r(y,s)$ be a boundary ball with $r \sim \sup_\tau \diam\Omega_\tau$ and $V_r$ and $V_r'$ be two corkscrew points of $\Delta_r(y,s)$ both later in time than $s + (2r)^2$.
	Let	$\omega^{V_r}$ be the parabolic measure of \cref{E:pde}, $A$ satisfy \cref{E:elliptic} and $E \subset \Delta_r(y,s)$ be a Borel set then
	\begin{equation}
		\label{E:changeCork}
		\omega^{V_r}(E) \sim \omega^{V_r'}(E).
	\end{equation}
	The same result holds with the adjoint parabolic measure $\omega^{*V_r}$, and $V_r$ and $V_r'$ are corkscrew points earlier in time than $s - (2r)^2$.
\end{lemma}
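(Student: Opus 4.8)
The plan is to reduce the comparison of $\omega^{V_r}(E)$ and $\omega^{V_r'}(E)$ to an application of the Harnack inequality (\cref{L:Harnack}) connecting the two corkscrew points through a Harnack chain that stays uniformly away from $\partial\Omega$. First I would fix a Borel set $E\subset\Delta_r(y,s)$ and recall that $u(X,t):=\omega^{(X,t)}(E)$ is a non-negative bounded weak solution of \cref{E:pde} in $\Omega$ (by the Perron--Wiener--Brelot construction of \cref{E:pm} and the fact that $\omega^{(X,t)}$ is Borel, so we may take $f$ to be the indicator of a set). The claim \eqref{E:changeCork} is then exactly the statement $u(V_r)\sim u(V_r')$.

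The key geometric observation is that both $V_r$ and $V_r'$ are corkscrew points of $\Delta_r(y,s)$ with $r\sim\sup_\tau\diam\Omega_\tau$, hence by the definition of corkscrew point both satisfy $\delta(V_r),\delta(V_r')\sim r$ and both lie at a time later than $s+(2r)^2$; since the time slices of $\Omega$ all have diameter comparable to $r_0\sim r$, the two points can be joined inside $\{(X,t)\in\Omega:\delta(X,t)\gtrsim r,\ t\in[s+(2r)^2,\, s+Cr^2]\}$ by a chain of a bounded number (depending only on the character $(\ell,N,C_0)$) of parabolic balls $Q_{cr/2}$, each contained in $\Omega$ with $4$ times the radius still at distance $\gtrsim r$ from $\partial\Omega$, and each consecutive pair of centres ordered in time. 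On each such ball the non-negative solution $u$ satisfies the hypotheses of \cref{L:Harnack}, which gives $u$ at the later centre bounded above and below by a fixed multiple of $u$ at the earlier centre (with $|Y-Z|^2/|s-\tau|\lesssim 1$ along the chain). Iterating along the chain yields $u(V_r)\sim u(V_r')$ with constants depending only on $\lambda,\Lambda,n$ and the character of $\Omega$, which is \eqref{E:changeCork}. For the adjoint statement one repeats the argument with $\omega^{*V_r}$, now a non-negative solution of \cref{E:pde:adjoint}, using the backward-in-time Harnack inequality from \cref{L:Harnack} and ordering the chain in the reverse time direction so that each step moves to an earlier time; equivalently one passes to the reflected domain $\tilde\Omega$ via \cref{E:pde:adjoint:reflected}.

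The main obstacle is purely geometric: one must verify that a Harnack chain of uniformly bounded length, with all balls at distance $\gtrsim r$ from $\partial\Omega$ and with centres monotone in time, genuinely exists between any two admissible corkscrew points. This uses crucially that $\diam(\Omega_\tau)\sim r_0\sim r$ for every $\tau$ (the remark following \cref{D:domain}), so that moving in the spatial directions costs a bounded number of balls, together with the freedom to first increase the time coordinate by an amount $\sim r^2$ to pass from $V_r$ up to a ``high'' interior point and then come back down to $V_r'$ — the time-monotonicity required by \cref{L:Harnack} is preserved since each leg of the chain is monotone in $t$, and the ratio $|Y-Z|^2/|s-\tau|$ stays bounded because both numerator and the available time increment are of order $r^2$. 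Once this chain is in place the rest is a routine finite iteration of \cref{L:Harnack}.
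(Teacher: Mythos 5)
Your reduction of \eqref{E:changeCork} to the statement $u(V_r)\sim u(V_r')$ for $u(X,t)=\omega^{(X,t)}(E)$ is fine, but the Harnack-chain argument you then run does not prove it: the parabolic Harnack inequality (\cref{L:Harnack}) is one-sided in time. It gives $u(Z,\tau)\leq C\,u(Y,s)$ only for $\tau<s$, i.e.\ it bounds the value at the \emph{earlier} point by the value at the \emph{later} point, and it does \emph{not} give the reverse bound. Your sentence asserting that each step of the chain bounds ``$u$ at the later centre above and below by a fixed multiple of $u$ at the earlier centre'' is therefore false. Consequently your ``up then down'' chain from $V_r$ to a high interior point $W$ and back down to $V_r'$ only yields the two one-sided estimates $u(V_r)\lesssim u(W)$ and $u(V_r')\lesssim u(W)$, which do not combine into $u(V_r)\sim u(V_r')$. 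No arrangement of interior Harnack chains can produce an inequality of the form $u(\text{later point})\lesssim u(\text{earlier point})$: that is a backward-Harnack-type statement which is false for general non-negative solutions (consider boundary data that switches on only after the earlier time), and which here must exploit the specific fact that the boundary data $\chi_E$ vanishes on $\partial\Omega$ at the times where your chain lives.

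This extra input is exactly what the paper's proof supplies. One picks a boundary ball $\Delta_{r/2}(y',s')$ later in time than, and disjoint from, $E$, so that $u$ vanishes on it, and applies the boundary Carleson estimate \cref{L:carl} to control $u$ near $\partial\Omega$ by its value at a forward corkscrew point of $\Delta_{r/2}(y',s')$, which lies at a time earlier than $s+(2r)^2$ and hence earlier than $V_r$. Combining this with interior Harnack chains used only in the permitted forward direction, and varying $y'$, gives $u(X,s')\lesssim u(V_r)$ for all $(X,s')\in\Omega_{s'}$. The maximum principle (\cref{L:MP}, in the form discussed in \cref{R:max}) applied on $\Omega\cap\{t\geq s'\}$ then propagates this bound to all later times, in particular to $V_r'$, yielding $u(V_r')\lesssim u(V_r)$; exchanging the roles of the two corkscrew points gives the other inequality. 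To repair your write-up you must replace the ``come back down'' leg of your chain by this Carleson-estimate-plus-maximum-principle step (and argue symmetrically, via the reflected domain, for the adjoint measure).
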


\begin{proof}
	The idea of this proof is to view $\omega^{V_r}(E)$ as $u(V_r)$, where $u$ is the solution of \cref{E:pde} with boundary data $\chi_{E}$ and $\chi$ is the usual indicator function.
	We then set up to apply the maximum principle to an appropriately chosen domain $\partial\Omega \cap \{t \geq s'\}$.

	Let $\Delta_{r/2}(y',s')$ be a boundary ball later in time than $\Delta_r(y,s)$ so that $E$ and $\Delta_{r/2}(y',s')$ are disjoint.
	Therefore the boundary data is $0$ there and we can apply \cref{L:carl} to control $u$ in $\Psi_{r/4}(y',s')$ by $u(V^+)$, where $V^+$ is a corkscrew point of $\Delta_{r/2}(y',s')$ and at a time earlier than $(2r)^2$.

	Since $r \sim \diam\Omega_{s'}$ using Harnack chains, the Harnack inequality (\cref{L:Harnack}) and by varying $y'$ we can uniformly control $u$ at the time $s'$ by $u(V_r)$, that is we have $u(X,s')\lesssim u(V_r)$ for all $(X,s')\in\Omega_{s'}$.
	It follows by the maximum principle in \cref{R:max} applied to the domain to $\partial\Omega \cap \{t \geq s' \}$ that $u(X,t)\lesssim u(V_r)$ for all $(X,t)\in\Omega\cap \{t \geq s' \}$.
	In particular, $u(V'_r) \lesssim u(V_r)$ and therefore $\omega^{V_r}(E) \lesssim \omega^{V_r'}(E)$. Exchanging the roles of $V_r$ and $V_r'$ gives the other inequality.
\end{proof}

We use the following properties of the Green's function. The existence of the Green's functions $G$ and $G^*$ in $\Omega$ for \eqref{E:pde}, \eqref{E:pde:adjoint}, respectively is well known and follows from H\"{o}lder continuity and a Perron-Wiener-Brelot style argument.

\begin{lemma}[{\cite{Fri64}}]
	\label{L:Green}
	Let $\Omega$ be a $Lip(1,1/2)$ cylinder and $A$ satisfy \cref{E:elliptic} then the Green's function $G$ for \cref{E:pde} has the following properties.
	\begin{enumerate}
		\item $G(X,t,Y,s) = 0$ for $s > t$, $(X,t)$, $(Y,s) \in \Omega$.

		\item For fixed $(Y,s) \in \Omega$, $G(\cdot, Y,s)$ is a solution to \cref{E:pde} in $U \setminus \{(Y,s)\}$.

		\item For fixed $(X,t) \in \Omega$, $G(X,t, \cdot)$ is a solution to \cref{E:pde:adjoint}, the adjoint equation in $\Omega \setminus \{(X,t)\}$.

		\item If $(X,t)$, $(Y,s) \in \Omega$ then $G(X,t, \cdot)$ and $G(\cdot, Y,s)$ extend continuously to $\overline{\Omega}$ provided both functions are defined to be zero on $\partial \Omega$.
	\end{enumerate}
\end{lemma}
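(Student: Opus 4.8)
The plan is to construct $G$ from the global fundamental solution of $L$ on $\R^{n+1}$ together with a correction term produced by the Perron--Wiener--Brelot method, and to obtain property~(3) from a reciprocity identity linking $G$ to the analogous Green's function $G^*$ of $L^*$. Write $\Gamma(X,t,Y,s)$ for the fundamental solution of \cref{E:pde} on all of $\R^{n+1}$---it exists for bounded, measurable, elliptic $A$, obeys Aronson's two-sided Gaussian bounds, vanishes for $t\le s$, and $\Gamma(\cdot,\cdot,Y,s)$ is a weak solution of \cref{E:pde} on $\R^{n+1}\setminus\{(Y,s)\}$---and set $\Gamma^*(Y,s,X,t):=\Gamma(X,t,Y,s)$, the fundamental solution of \cref{E:pde:adjoint}. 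Fix $(Y,s)\in\Omega$. Because $(Y,s)$ is an interior point, the restriction of $(X,t)\mapsto\Gamma(X,t,Y,s)$ to $\partial\Omega$ is continuous (by interior H\"older continuity, \cref{L:int_Holder}), bounded, and tends to $0$ as $t\to\pm\infty$ (trivially as $t\to-\infty$, and by the Gaussian upper bound together with the uniformly bounded spatial cross-sections as $t\to+\infty$); hence it lies in $C_0(\partial\Omega)$. By the PWB method and \cref{R:max} there is a unique bounded solution $h_{(Y,s)}$ of \cref{E:pde} in $\Omega$ with $h_{(Y,s)}=\Gamma(\cdot,\cdot,Y,s)$ on $\partial\Omega$ and $h_{(Y,s)}\in C(\overline\Omega)$ tending to $0$ as $t\to-\infty$. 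Define $G(X,t,Y,s):=\Gamma(X,t,Y,s)-h_{(Y,s)}(X,t)$, and define $G^*$ symmetrically from $\Gamma^*$ and the PWB solution $h^*_{(X,t)}$ of \cref{E:pde:adjoint}.

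Properties~(2) and (4) are then immediate from the construction: $L_X h_{(Y,s)}=0$ in $\Omega$ while $L_X\Gamma(\cdot,\cdot,Y,s)=0$ on $\R^{n+1}\setminus\{(Y,s)\}$, so $G(\cdot,\cdot,Y,s)$ solves \cref{E:pde} in $\Omega\setminus\{(Y,s)\}$; and $G(\cdot,\cdot,Y,s)$ extends continuously to $\overline\Omega\setminus\{(Y,s)\}$ and vanishes on $\partial\Omega$ by the choice of boundary data. For property~(1), recall $\Gamma(X,t,Y,s)=0$ whenever $t\le s$, and note that $\Gamma(\cdot,\cdot,Y,s)|_{\partial\Omega}$ vanishes on $\partial\Omega\cap\{t\le s\}$ (again since $(Y,s)$ is interior). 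Thus $h_{(Y,s)}$ is a bounded solution of \cref{E:pde} in $\Omega$ that vanishes on $\partial\Omega\cap\{t\le s\}$ and tends to $0$ as $t\to-\infty$; applying the parabolic maximum principle on each truncated domain $\Omega\cap\{a<t<s\}$, whose parabolic boundary is $(\partial\Omega\cap\{a\le t\le s\})\cup\Omega_a$, and letting $a\to-\infty$ forces $h_{(Y,s)}\equiv0$ on $\Omega\cap\{t\le s\}$. Hence $G(\cdot,\cdot,Y,s)=0$ there, which is property~(1); the analogous facts hold for $G^*$ with the direction of time reversed.

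It remains to establish property~(3), which I would derive from the reciprocity identity $G(X,t,Y,s)=G^*(Y,s,X,t)$. Fix distinct $(X,t),(Y,s)\in\Omega$, put $u:=G(\cdot,\cdot,Y,s)$ and $v:=G^*(\cdot,\cdot,X,t)$, and apply Green's second identity to the space--time divergence form of $v\,Lu-u\,L^*v$ on $\Omega$ regularized by excising small parabolic balls $B_\epsilon(X,t)$ and $B_\epsilon(Y,s)$: the interior integral vanishes on the regularized region, the lateral contribution vanishes since $u=v=0$ on $\partial\Omega$ together with the decay of $u,v$ in $t$, the energy bounds for $u,v$ away from their poles and the Caccioppoli inequality (\cref{L:Caccio}) control the flux integrals over $\partial B_\epsilon(X,t)$ and $\partial B_\epsilon(Y,s)$, and as $\epsilon\to0$ the two unit masses carried by $\Gamma$ and $\Gamma^*$ yield $u(X,t)=v(Y,s)$, i.e.\ $G(X,t,Y,s)=G^*(Y,s,X,t)$. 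Since $G^*(\cdot,\cdot,X,t)$ solves \cref{E:pde:adjoint} in $\Omega\setminus\{(X,t)\}$ and extends continuously to $\overline\Omega$ with zero boundary values (the symmetric forms of (2) and (4)), reciprocity transports these to $G(X,t,\cdot,\cdot)$, giving property~(3) and finishing the continuity statement in (4). The main obstacle is precisely this reciprocity step: with only bounded measurable coefficients and the rough $\Lip(1,1/2)$ boundary one must run the integration by parts carefully---approximating $\Omega$ from the inside, using that $u$ and $v$ lie in the natural energy space away from their poles and in controlled classes near them, and checking that all lateral, temporal, and sphere terms behave as claimed in the limit. The other (by now classical, and here merely quoted) input is the existence of the global fundamental solutions $\Gamma,\Gamma^*$ with the stated Gaussian and continuity properties in this generality.
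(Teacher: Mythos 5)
The paper does not actually prove this lemma: it is quoted from the literature (Friedman, Aronson), with the remark that existence ``follows from H\"older continuity and a Perron--Wiener--Brelot style argument'' --- which is precisely the construction you give, so your outline is the intended one. Your treatment of (1), (2) and (4) is correct: $\Gamma$ exists with Aronson's Gaussian bounds for bounded measurable elliptic $A$, its restriction to $\partial\Omega$ is a $C_0(\partial\Omega)$ function since the pole is interior, and the vanishing of the corrector $h_{(Y,s)}$ for $t\le s$ follows from the maximum principle on $\Omega\cap\{t<s\}$ exactly as you say. The one step I would push back on is the mechanism you propose for reciprocity. With merely bounded measurable coefficients the conormal flux $A\nabla u\cdot\nu$ has no pointwise meaning on the excised parabolic spheres $\partial B_\epsilon(X,t)$, $\partial B_\epsilon(Y,s)$, so the surface-integral form of Green's second identity you invoke cannot be run literally; Caccioppoli gives only $L^2$ control of $\nabla u$ on solid annuli, not trace control of fluxes on spheres. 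The standard repair is to prove $G(X,t,Y,s)=G^*(Y,s,X,t)$ by duality of the volume representation formulas: for $f,g\in C_0^\infty(\Omega)$ let $u(X,t)=\int G(X,t,Y,s)f(Y,s)\,dY\,ds$ solve $Lu=-f$ with zero lateral data and $v(Y,s)=\int G^*(Y,s,X,t)g(X,t)\,dX\,dt$ solve $L^*v=-g$; testing the weak formulations against each other gives $\iint Gfg=\iint G^*gf$, hence equality of the kernels a.e., and then everywhere off the diagonal by interior H\"older continuity (\cref{L:int_Holder}). With that substitution your argument for (3), and hence the remaining half of (4), goes through.
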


The following lemma is a consequence of \cite{Nys97}. We state it for the adjoint equation \eqref{E:pde:adjoint} in $\Omega$ as we apply the lemma in this context.
This lemma was originally stated in Lipschitz cylinders in \citelist{\cite{FGS86}*{Theorem 1.4} \cite{FS97}*{Theorem 4}} and was extended to the domains in question by \cite{Nys97}.
\begin{lemma}
	\label{L:Green:comparison}
	Let $\Omega$ be a $Lip(1,1/2)$ cylinder, $A$ satisfy \cref{E:elliptic}, $G^*$ be Green's function and $\omega^*$ be the parabolic measure associated to \cref{E:pde:adjoint}.
	Let $\Delta_r\subset\Delta_d$ be the surface balls on $\partial\Omega$ such that $\Delta_{2r}\subset\Delta_d$ and $d\lesssim \frac{r_0}{C_0}$. Then there exists constants depending on $n$, $\lambda$ and $\Lambda$ and character of the domain $\Omega$ such that

	\begin{equation}
		r^n G^*( V^-(\Delta_d),V^-(\Delta_r)) \sim \omega^{*V^-(\Delta_d)}(\Delta_r).
	\end{equation}
	Here $V^-(\Delta_r)$ and $V^-(\Delta_d)$ are backward in time corkscrew points as in Lemma \ref{L:carl}.
\end{lemma}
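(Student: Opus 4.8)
The plan is to reduce the claimed two-sided estimate to the corresponding statement for the \emph{forward} parabolic equation, where one can invoke the Green's function--parabolic measure comparison of \cite{FGS86}, \cite{FS97} in the form extended to $\Lip(1,1/2)$ cylinders by \cite{Nys97}. Recall the reflection $v(X,t)=u(X,-t)$, $\tilde A(X,-t)=A(X,t)$ introduced in the introduction: under it, solutions of the adjoint equation \eqref{E:pde:adjoint} in $\Omega$ correspond to solutions of the forward equation $\tilde L v=\di(\tilde A^*\nabla v)-v_t=0$ in the reflected domain $\tilde\Omega=\{(X,-t):(X,t)\in\Omega\}$, which is again a $\Lip(1,1/2)$ cylinder with the same character. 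Under this reflection the adjoint Green's function $G^*$ for $\Omega$ becomes the ordinary Green's function $\tilde G$ for $\tilde L$ in $\tilde\Omega$, the adjoint parabolic measure $\omega^{*}$ becomes the forward parabolic measure $\tilde\omega$, and the backward-in-time corkscrew points $V^-(\Delta_r)$, $V^-(\Delta_d)$ of a surface ball $\Delta\subset\partial\Omega$ become ordinary (forward-in-time) corkscrew points of the corresponding surface balls on $\partial\tilde\Omega$. Thus it suffices to prove, for the forward equation and forward corkscrew points,
\begin{equation}
\label{E:Green:comp:fwd}
r^n\,\tilde G(V(\Delta_d),V(\Delta_r))\sim \tilde\omega^{V(\Delta_d)}(\Delta_r),
\end{equation}
under the hypotheses $\Delta_{2r}\subset\Delta_d$ and $d\lesssim r_0/C_0$.

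To establish \eqref{E:Green:comp:fwd} I would argue as follows. First, using property (2) of Lemma \ref{L:Green}, the function $(X,t)\mapsto \tilde G(X,t,V(\Delta_r))$ is a nonnegative solution of the forward equation in $\tilde\Omega\setminus\{V(\Delta_r)\}$ which, by property (4), vanishes continuously on $\partial\tilde\Omega$; in particular it vanishes on $\Psi_{2\rho}(\bar y,\bar s)\cap\partial\tilde\Omega$ for a boundary point $(\bar y,\bar s)$ associated to $\Delta_d$ and a suitable $\rho\sim d$, provided $V(\Delta_r)$ is not too close to $\partial\tilde\Omega$ there (which holds since $V(\Delta_r)$ sits at distance $\sim r$ from $\Delta_r\subset\Delta_d$ while $V(\Delta_d)$ lies at scale $\sim d$). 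Applying the Carleson-type estimate \cref{L:carl} to this solution on $\Psi_{2\rho}$ and comparing interior values via the Harnack inequality \cref{L:Harnack} along a Harnack chain joining $V(\Delta_d)$ to a fixed corkscrew point of $\Delta_d$, one obtains
\[
\tilde G(V(\Delta_d),V(\Delta_r))\sim \tilde G(\hat V,V(\Delta_r)),
\]
where $\hat V$ is any convenient interior comparison point at scale $d$. The remaining task is the scale-$r$ estimate: one wants $\tilde G(\hat V,V(\Delta_r))\sim r^{-n}\tilde\omega^{\hat V}(\Delta_r)$. This is exactly the standard local comparison between Green's function and parabolic (caloric-type) measure — it follows by writing $\tilde\omega^{\hat V}(\Delta_r)$ as the value at $\hat V$ of the solution with boundary data $\chi_{\Delta_r}$, using the doubling property \cref{L:doubling} together with the Carleson estimate and the representation of parabolic measure through the Green's function (the Riesz-type formula underpinning property (3) of \cref{L:Green}), and then invoking the Harnack inequality to pass between interior evaluation points at scale $r$. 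Finally, feeding this back through the reflection gives the statement of the lemma with $V^-(\Delta_r)$, $V^-(\Delta_d)$.

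The two-sided nature of the estimate means one actually needs both a lower and an upper bound, and the main obstacle is the \textbf{upper} bound $r^n\tilde G(V(\Delta_d),V(\Delta_r))\lesssim \tilde\omega^{V(\Delta_d)}(\Delta_r)$: here one must control $\tilde G(\hat V,V(\Delta_r))$ from above by $r^{-n}\tilde\omega^{\hat V}(\Delta_r)$, which requires using the Carleson estimate in the direction that bounds the Green's function near the boundary in terms of its value at a corkscrew point, and then the identity relating that corkscrew value to $\tilde\omega^{\hat V}(\Delta_r)$ via the size of a surface ball ($\sigma(\Delta_r)\sim r^n$ by \eqref{E:comp}). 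The lower bound is comparatively easier and follows from the maximum principle \cref{L:MP}/\cref{R:max}: one compares $\tilde G(\cdot,V(\Delta_r))$ on $\tilde\Omega\cap\{t\ge \bar s\}$ with a constant multiple of the solution having boundary data $\chi_{\Delta_r}$, using that both are nonnegative, vanish on the relevant part of the boundary, and are comparable at a corkscrew point by \cref{L:carl}. Throughout, all implied constants depend only on $n$, $\lambda$, $\Lambda$ and the character $(\ell,N,C_0)$, as required. Since \eqref{E:Green:comp:fwd} for $\Lip(1,1/2)$ cylinders is precisely what \cite{Nys97} proves, the cleanest exposition is simply to cite \cite{Nys97} for \eqref{E:Green:comp:fwd} and supply only the reflection argument in detail; the sketch above indicates why that citation is valid.
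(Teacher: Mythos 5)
Your proposal is correct and matches the paper's treatment: the paper gives no proof of this lemma, simply citing \cite{Nys97} (building on \cite{FGS86} and \cite{FS97}) for the forward-in-time comparison and noting that the statement is phrased for the adjoint equation, which is exactly your reflection-plus-citation argument. The additional sketch of how \cite{Nys97} proves the forward estimate is fine but not something the paper supplies or needs.
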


\section{Proof of \Cref{T:Rp}}
\label{sec:RimpliesD_proof}
This proof uses some of the ideas from Kenig and Pipher's \cite{KP93} proof in the elliptic setting.
However due to the time irreversibility of parabolic equations we do not have the comparison principle, the Carleson estimate \cite{CFMS81}*{Theorem 1.1} or Harnack's principle that they used.
Also the non-commutativity of taking the adjoint and the pullback mapping introduce additional difficulties.
Instead, we get around these problems using lemmas developed in \cite{Nys97}, the maximum principle, a different Carleson type estimate,  approaching some estimates from an integral instead of a pointwise point of view and using the Hardy-Littlewood maximal function.

Assume that $(R)_p$ holds for \cref{E:pde} and let $\omega^*$ be the parabolic measure associated to the adjoint equation \cref{E:pde:adjoint}.
By \cref{R:Dp} to show that $(D^*)_{p'}$ holds we need to show that $\omega^* \ll \sigma$, where $\sigma$ is the measure on $\partial\Omega$ in \cref{D:measure}, and $\omega^*$ belongs to the reverse H\"{o}lder class $B_p(\mathrm{d}\sigma)$, see \cref{D:Bp}.

\begin{steps}
	\item Preliminaries

	We first prove \cref{E:Bp} for surface balls that fit inside a cylinder $2\Z_j$ and then use a covering argument to show that \cref{E:Bp} holds for all balls with the correct scaling.
	Note that since \cref{E:comp} holds in $2\Z_j$ so we can replace $\sigma$ by $\H^n$.

	Let $\Delta_d$ be a surface ball on $\partial \Omega$ with $d \lesssim \frac{r_0}{C_0}$ then $\Delta_d$ lies completely inside an $\ell$-cylinder $2\Z_j$.
	After we apply $\phi_j$, the pullback transformation, $\Delta_d$ becomes a surface ball $\Delta_d$ on $\partial U$, where $U$ is the upper half space.
	Let $\Delta_r(y,a) \subset \Delta_d \subset U$ be a surface ball such that $4r < d$.
	Note if we omit the point that $\Delta_r$ is centred at then it will be centred at $(y,a) \in \partial U$.

	As in \cite{KP93}, we define a non-negative $C^\infty$ function $f$ on $\partial U$ as follows:
	$f = 0$ on $\Delta_r$, $f = 1$ on $\Delta_{3r}\backslash\Delta_{2r}$ and $f=0$ on $\partial U \backslash \Delta_{4r}$
	with $|\nabla_T f| \lesssim \frac{1}{r}$ and $|\partial_t f| \lesssim \frac{1}{r^2}$.
	Here we note that $\Delta_{4r}\subset\Delta_d$.
	Using \cref{T:Dn} and interpolation we have
	\begin{equation}
		\label{E:step1:grad-f}
		\begin{split}
			\int_{\partial U} |\nabla_T f|^p \dH^n &\lesssim r^{n+1-p}, \\
			\int_{\partial U} |\D_n f|^p \dH^n &\lesssim \|D^t_{1/2}f\|^p_{L^p} + \|\nabla f\|^p_{L^p} \lesssim \|\partial_t f\|^{p/2}_{L^p} \|f\|^{p/2}_{L^p} + \|\nabla f\|^p_{L^p} \lesssim r^{n+1-p}.
		\end{split}
	\end{equation}
	By Sobolev embedding, since $f \in C^\infty_0(\Delta_d)$, for a fixed time $t$,
	$\int_{\R^{n-1}} |f(x,t)|^p \dx \lesssim \int_{\R^{n-1}} |\nabla_T f(x,t)|^p \dx.$ Here and in the following estimate the implied constant will depend on $d$. Integrating the previous estimate in time gives
	\begin{equation}
		\label{E:step1:f}
		\int_{\partial U} |f|^p \dH^n \lesssim \int_{\partial U} |\nabla_T f|^p \dH^n \lesssim r^{n+1-p}.
	\end{equation}

	It follows that $f^u=f\circ \phi_j^{-1}$ is $\Delta_d$ supported boundary data on $\partial\Omega$ with $L^p_{1,1/2}(\partial\Omega,d\sigma)$ norm comparable to $r^{(n+1)/p-1}$.

	Since we assume $(R)_p$ solvability for the equation \eqref{E:pde} let $u$ be the solution  of \cref{E:pde} in $\Omega$ with boundary data $f^u$. It follows that we have for $u$ the following estimate
	\begin{equation}\label{NF}
		\|\tilde{N}(\nabla u)\|_{L^p(\partial\Omega)}\lesssim r^{(n+1)/p-1}.
	\end{equation}

	Let $s \ll r$ (we are going to take limit $s\to 0+$) and let $P\in\partial\Omega$ be a point on the boundary such that $\Delta_{10s}(P) \subset \Delta_r$.

	\item Equivalence between the Green's function and the parabolic measure.

	We now have three surface balls $\Delta_s\subset\Delta_r\subset\Delta_d$. Let $V^-_s$, $V^-_r$ and $V^-_d$ be their corkscrew points shifted backwards in time from their centres by $100s^2$, $100r^2$ and $100d^2$ respectively.
	Therefore, by applying \cref{L:Green:comparison} we have
	\begin{equation}
		\label{E:comparison}
		\frac{\omega^{*V^-_d}(\Delta_s(P))}{\omega^{*V^-_d}(\Delta_r)}  \sim \frac{s^n}{r^n} \frac{G^*(V^-_d,V^-_s)}{G^*(V^-_d,V^-_r)}=\frac{s^n}{r^n} \frac{G(V^-_s,V^-_d)}{G(V^-_r,V^-_d)}.
	\end{equation}

	\item Controlling Green's function by the solution $u$.
	\label{T:step3}

	For the next step in this proof we want to show that \cref{E:comparison} can be uniformly controlled by $u(V^-_s) s^n/r^n$ for all $s \ll r$.
	To this end, we show that $G(X,t, V^-_d) \lesssim u(X,t) G(V^-_r, V^-_d)$ on the boundary of $T(\Delta_{5r/2})$ and then apply the maximum principle, \cref{L:MP}, to show that $G(X,t, V^-_d) \lesssim u(X,t) G(V^-_r, V^-_d)$ for $(X,t)\in T(\Delta_{5r/2})$.

	On $\Delta_{5r/2}$ we have that $0 = G(\cdot,V^-_d) \leq u(\cdot) G(V^-_r, V^-_d)$ so we are left to show that $G^{V^-_d}(X) \lesssim G^{V^-_d}(V^-_r)$ and $u \sim 1$ on $\partial T(\Delta_{5r/2}) \backslash \partial \Omega$.

	\begin{steps}
		\item $G(X,t, V^-_d) \lesssim G(V^-_r, V^-_d)$ on $\partial T(\Delta_{5r/2}) \backslash \partial \Omega$.
		Here we use that $T(\Delta_{5r/2})$ is later than $V^-_r$ in time, i.e. $T(\Delta_{5r/2}) \subset \{(X,t) : t > a - (9r)^2\}$. For points $(X,t)$ in $\partial T(\Delta_{5r/2})$ away from $\partial \Omega$ we can just apply the interior Harnack inequality to conclude that $G(X,t, V^-_d) \lesssim G(V^-_r, V^-_d)$.	For points $(X,t)$ near $\partial \Omega$ we can apply \cref{L:carl}, to obtain $G(X,t,V^-_d) \lesssim G(V^-(\Delta_{r}(z,\tau)), V^-_d)$, where $(z,\tau)$ is any point in $\Delta_{5r/2}$.
		Since $V^-_r$ is at an earlier time than $V^-(\Delta_{r}(z,\tau))$, we can again apply the Harnack inequality, \cref{L:Harnack}, to obtain $G(X,t,V^-_d) \lesssim G(V^-_r, V^-_d)$ for $(X,t) \in T(\Delta_{r}(z,\tau))$. From this the claim follows.

		\item $u \sim 1$ on $\partial T(\Delta_{5r/2}) \backslash \partial \Omega$

		As before, near to $\partial \Omega$ applying \cref{L:carl} to $1-u$ gives us that $u(X,t) \sim 1$ for $(X,t) \in \Psi_{r/4}(z,\tau)$, where $(z,\tau)\in\partial \Delta_{5r/2}$.
		Away from $\partial \Omega$ we use interior Harnack's inequality to conclude that $u \sim 1$ at a later time when $\partial T(\Delta_{5r/2}) \cap \partial \Omega$.

		\item Applying the maximum principle

		Therefore, by applying the maximum principle, we have that $G(X,t, V^-_d) \lesssim u(X,t) G(V^-_r, V^-_d)$ for $(X,t) \in T(\Delta_{5r/2})$ and since $V^-_s\in T(\Delta_{5r/2})$ conclude that $G(V^-_s, V^-_d) \lesssim u(V^-_s) G(V^-_r, V^-_d)$.

		We have now proved
		\begin{equation}
			\frac{\omega^{*V^-_d}(\Delta_s(P))}{\omega^{*V^-_d}(\Delta_r)} \lesssim \frac{s^n}{r^n} u(V^-_s).
		\end{equation}
	\end{steps}

	\item

	Applying the Poincar\'{e} type inequality to the spacial variables, \cref{cor:Poincare}, for a fixed time $t=t'$ we have for $q > 1$
	\begin{equation*}
		\left(\fint_{\left.T(\Delta_s(P))\right| _{t'}} |u(X,t)|^q \dX \right)^{1/q} \lesssim s \left(\fint_{\left.T(\Delta_s(P))\right| _{t'}} |\nabla u(X,t)|^q \dX \right)^{1/q}.
	\end{equation*}
	Then averaging in time over $(a'-s^2, a'+s^2)$ gives
	\begin{equation}
		\label{E:Poincare:applied}
		\left(\fint_{T(\Delta_s(P))} |u(X,t)|^q \dX\dt \right)^{1/q}  \lesssim s \left(\fint_{T(\Delta_s(P))} |\nabla u(x,t)|^q \dX\dt \right)^{1/q}.
	\end{equation}
	By applying the Harnack inequality to $u(V^-_s)$,  we can estimate the value of $u$ at this point by
	the infimum of $u$ over the ball $Q_{s/8}\left(V^-_s + (0, s^2/4^2)\right)$ (the centre of this ball is $V^-_s$ shifted by $s/16$ in time).  It follows that

	\begin{equation*}
		\begin{split}
			u(V^-_s)
			&\lesssim \inf_{Q_{s/8}\left(V_s + (0, s^2/4^2)\right)} u  \lesssim \left(\fint_{Q_{s/2}(V_s)} |u(X,t)|^q \dX\dt \right)^{1/q} \\ &\lesssim s \left(\fint_{T(\Delta_{12s}(P))} |\nabla u(X,t)|^q \dX\dt \right)^{1/q}.
		\end{split}
	\end{equation*}
	Therefore
	\begin{equation}
		\label{E:endStep4}
		\frac{\omega^{*V^-_d}(\Delta_s(P))}{\omega^{*V^-_d}(\Delta_r)} \lesssim \frac{s^n}{r^n} u(V^-_s) \lesssim \frac{s^{n+1}}{r^n} \left(\fint_{T(\Delta_{12s}(P))} |\nabla u(X,t)|^q \dX\dt \right)^{1/q}.
	\end{equation}

	\item
	We would like to bound this by $\widetilde{N}_2(\nabla u)(P)$, the $L^2$ based non-tangential maximal function. This is easy to do in the elliptic setting but it is not clear whether it is possible to do in our setting due to the time irreversibility of the parabolic PDE. Instead we clearly have the following bound
	\begin{equation*}
		\left(\fint_{T(\Delta_{12s}(P))} |\nabla u(X,t)|^q \dX\dt \right)^{1/q} \lesssim \left(M\left(\widetilde{N}_q(\nabla u)^q\right)(P) \right)^{1/q},
	\end{equation*}
	where $M$ is the parabolic version of the Hardy-Littlewood maximal function defined using parabolic boundary balls.

	Combining this estimate with \cref{E:endStep4} we have
	\begin{equation}
		\label{E:endStep5}
		\frac{\omega^{*V^-_d}(\Delta_s(P))}{\sigma(\Delta_s(P))} \lesssim \frac{\omega^{*V^-_d}(\Delta_r)}{r^n} \left(M\left(\widetilde{N}_q (\nabla u)^q\right)(P) \right)^{1/q},
	\end{equation}
	where as before $s < r/10$ and $P$ is such that $\Delta_{10s}(P) \subset \Delta_r$.
	In particular, this estimate holds for $P \in \Delta_{r/2}$.

	\item The $B_p$ condition.

	To show the property $(D^*)_{p'}$ we need to show that $K^{V^-_d} = \dw^{*V^-_d}/\dsig$ belongs to the reverse H\"{o}lder class $B_p(\mathrm{d}\sigma)$, c.f.\ \cref{E:Bp}.
	To do this we take the same approach as \cite{KP93}.
	Let
	\begin{equation*}
		h^{V^-_d}(P) := \sup_{s\in (0,r/10)} \frac{\omega^{*V^-_d}(\Delta_s(P))}{\sigma(\Delta_s(P))},
	\end{equation*}
	then $K^{V^-_d}(P) \leq h^{V^-_d}(P)$ for $P \in \Delta_{r/2}$.
	Since $(M(|f|^q))^{1/q}$ is $L^p$ bounded for $p > q>1$ and $\widetilde{N}_q (f) \leq \widetilde{N}_2 (f)$ for $0 < q \leq 2$ we choose $q\in (1,\min\{2,p\})$ to conclude
	\begin{equation}
		\begin{split}
			\label{E:k:Lp-bound}
			\|K^{V^-_d}\|_{L^p(\mathrm{d}\sigma)}
			&\leq \|h^{V^-_d}\|_{L^p(\mathrm{d}\sigma)}
			\lesssim \frac{\omega^{*V^-_d}(\Delta_r)}{r^n} \left\|\widetilde{N}_q (\nabla u)\right\|_{L^p(\mathrm{d}\sigma)} \\
			&\lesssim \frac{\omega^{*V^-_d}(\Delta_r)}{r^n} \left\|\widetilde{N}_2 (\nabla u)\right\|_{L^p(\mathrm{d}\sigma)}
			\lesssim \frac{\omega^{*V^-_d}(\Delta_r)}{r^n}\left\|f\right\|_{L^p_{1,1/2}(\mathrm{d}\sigma)}  < \infty.
		\end{split}
	\end{equation}
	Therefore, $K^{V^-_d},h^{V^-_d} \in L^p(\mathrm{d}\sigma)$ and so $\omega^{*V^-_d} \ll \sigma$.

	Using \cref{E:k:Lp-bound} and \cref{NF} the weight $K^{V^-_d}$ satisfies the $B_p$ condition \cref{E:Bp} for the ball $\Delta_{r/2}$
	\begin{equation}
		\begin{split}
			\label{E:step6:Bp}
			\left(\frac{1}{\sigma\left(\Delta_{r/2} \right)}\int_{\Delta_{r/2}} \left( K^{V^-_d} \right)^p \dsig \right)^{1/p}
			&\lesssim \frac{\omega^{*V^-_d}(\Delta_r)}{r^n} \left( \frac{1}{r^{n+1}} r^{n+1-p} \right)^{1/p} \\
			&\lesssim \frac{\omega^{*V^-_d}(\Delta_r)}{\sigma(\Delta_r)}
			\lesssim \frac{\omega^{*V^-_d}(\Delta_{r/2})}{\sigma(\Delta_{r/2})}.
		\end{split}
	\end{equation}
	By considering  different balls $\Delta_r$ we can conclude that the above inequality holds for any boundary ball $\Delta_r \subset \Delta_d$ with $4r \leq d \lesssim \tfrac{r_0}{C_0}$. One can then use Lemma \ref{L:changeCork} to see the above reverse H\"{o}lder inequality holds for all balls up to size $d$.
	It follows that the $L^{p'}$ Dirichlet problem for the adjoint PDE \eqref{E:pde:adjoint} is solvable in $\Omega$. \qed
\end{steps}

\begin{bibdiv}
	\begin{biblist}

		\bib{Aro68}{article}{
			title = {Non-Negative Solutions of Linear Parabolic Equations},
			volume = {22},
			number = {4},
			journal = {Ann. Della Scuola Norm. Super. Pisa--Cl. Sci.},
			author = {Aronson, D.},
			date = {1968},
			pages = {607--694}
		}

		\bib{Bro87}{thesis}{
			address = {Minnesota, United States},
			title = {Layer Potentials and Boundary Value Problems for the Heat Equation on Lipschitz Cylinders},
			organization = {University of Minnesota},
			type = {Ph.D. Thesis},
			author = {Brown, R.},
			date = {1987}
		}

		\bib{Bro89}{article}{
			title = {The Method of Layer Potentials for the Heat Equation in Lipschitz Cylinders},
			volume = {111},
			number = {2},
			journal = {Amer. J. Math.},
			author = {Brown, R.},
			date = {1989},
			pages = {339--379},
			doi = {10.2307/2374513}
		}

		\bib{Bro90}{article}{
			title = {The Initial-Neumann Problem for the Heat Equation in Lipschitz Cylinders},
			volume = {320},
			number = {1},
			journal = {Trans. Amer. Math. Soc.},
			author = {Brown, R.},
			date = {1990},
			pages = {1--52},
			doi = {10.1090/S0002-9947-1990-1000330-7}
		}

		\bib{CFMS81}{article}{
			title = {Boundary Behavior of Nonnegative Solutions of Elliptic Operators in Divergence Form},
			volume = {30},
			number = {4},
			journal = {Indiana Univ. Math. J.},
			author = {Caffarelli, L.},
			author = {Fabes, E.},
			author = { Mortola, S.},
			author = {Salsa, S.},
			date = {1981},
			pages = {621--640}
		}

		\bib{CRS15}{article}{
			title = {Solvability of the Dirichlet, Neumann and the Regularity Problems for Parabolic Equations with H\"{o}lder Continuous Coefficients},
			eprint = {arXiv:1509.05695 [math.AP]},
			author = {Castro, A.},
			author={{Rodríguez-López}, S.},
			author={Staubach, W.},
			date = {2015-09-18}
		}

		\bib{Dah77}{article}{
			title = {Estimates of Harmonic Measure},
			volume = {65},
			doi = {10.1007/BF00280445},
			number = {3},
			journal = {Arch. Rational Mech. Anal.},
			author = {Dahlberg, B.},
			date = {1977},
			pages = {275--288},
			doi = {10.1007/BF00280445}
		}

		\bib{DH16}{article}{
			title = {The Dirichlet Boundary Problem for Second Order Parabolic Operators Satisfying Carleson Condition},
			eprint = {arXiv:1402.0036v5 [math.AP]},
			author = {Dindo\v{s}, M.},
			author={Hwang, S.},
			date = {2016},
			journal = {Rev. Mat. Iberoam.},
			status = {to appear}
		}

		\bib{DPP16}{article}{
			title = {BMO Solvability and the $A_\infty$ Condition for Second Order Parabolic Operators},
			doi = {10.1016/j.anihpc.2016.09.004},
			journal = {Ann. Inst. H. Poincar\'{e} Anal. Non Lin\'{e}aire},
			author = {Dindo\v{s}, M.} ,
			author = {Petermichl, S.},
			author = {Pipher, J.},
			date = {2016},
			status = {to appear}
		}

		\bib{Ekl79}{article}{
			title = {Generalized Parabolic Functions Using the Perron-Wiener-Brelot Method},
			volume = {74},
			doi = {10.2307/2043142},
			number = {2},
			journal = {Proc. Am. Math. Soc.},
			author = {Eklund, N.},
			date = {1979},
			pages = {247--253}
		}

		\bib{FGS86}{article}{
			title = {A Backward Harnack Inequality and Fatou Theorem for Nonnegative Solutions of Parabolic Equations},
			volume = {30},
			number = {4},
			journal = {Illinois J. Math.},
			author = {Fabes, E. B.},
			author = {Garofalo, N.},
			author = {Salsa, S.},
			date = {1986},
			pages = {536--565},
		}

		\bib{FJ68}{article}{
			title = {$L^p$ Boundary Value Problems for Parabolic Equations},
			volume = {74},
			doi = {10.1090/S0002-9904-1968-12061-0},
			number = {6},
			journal = {Bull. Amer. Math. Soc.},
			author = {Fabes, E. B.},
			author = {Jodeit, M.},
			date = {1968},
			pages = {1098--1102},
		}

		\bib{FR66}{article}{
			title = {Singular Integrals with Mixed Homogeneity},
			volume = {27},
			number = {1},
			journal = {Stud. Math.},
			author = {Fabes, E. B.},
			author={Rivi\'ere, N. M.},
			date = {1966},
			pages = {19--38},
		}

		\bib{FR67}{article}{
			author = {Fabes, E. B.},
			author = {Rivi\'ere, N. M.},
			title = {Symbolic Calculus of Kernels with Mixed Homogeneity},
			book = {
					title = {Singular Integrals},
					address = {Providence, R.I.},
					number = {10},
					editor = {Calder\'on, A},
					series = {Proc. Sympos. Pure Math.},
					publisher = {Amer. Math. Soc.},
				},
			date = {1967},
			pages = {106--127}
		}

		\bib{FS97}{article}{,
			title = {Behavior near the Boundary of Positive Solutions of Second Order Parabolic Equations},
			volume = {3},
			doi = {10.1007/BF02656492},
			number = {1},
			journal = {J. Fourier Anal. Appl.},
			author = {Fabes, E. B.},
			author = {Safonov, M.},
			date = {1997},
			pages = {871--882},
		}

		\bib{Fri64}{book}{,
			address = {Englewood Cliffs, NJ},
			title = {Partial Differential Equations of Parabolic Type},
			publisher = {Prentice-Hall},
			author = {Friedman, A.},
			date = {1964}
		}

		\bib{HL96}{article}{
			title = {$L^2$ Solvability and Representation by Caloric Layer Potentials in Time-Varying Domains},
			volume = {144},
			doi = {10.2307/2118595},
			number = {2},
			journal = {Ann. of Math.},
			author = {Hofmann, S.},
			author = {Lewis, J.},
			date = {1996},
			pages = {349--420},
		}

		\bib{HL99}{article}{
			title = {The $L^{p}$ Regularity Problem for the Heat Equation in Non-Cylindrical Domains},
			volume = {43},
			number = {4},
			journal = {Illinois J. Math.},
			author = {Hofmann, S.},
			author = {Lewis, J. L.},
			date = {1999},
			pages = {752--769},
		}

		\bib{HL01}{article}{
			title = {The Dirichlet Problem for Parabolic Operators with Singular Drift Terms},
			volume = {151},
			number = {719},
			journal = {Mem. Amer. Math. Soc.},
			author = {Hofmann, S.},
			author = {Lewis, J. L.},
			date = {2001},
			doi={10.1090/memo/0719},
		}

		\bib{KW88}{article}{
			title = {Parabolic Measure on Domains of Class Lip $\frac{1}{2}$},
			volume = {65},
			number = {2},
			journal = {Compos. Math.},
			author = {Kaufman, R.},
			author = {Wu, J.-M.},
			date = {1988},
			pages = {201--207},
		}

		\bib{KP93}{article}{
			author={Kenig, C.},
			author={Pipher, J.},
			title={The Neumann Problem for Elliptic Equations With Nonsmooth Coefficients},
			journal={Invent. Math.},
			volume={113},
			date={1993},
			number={3},
			pages={447--509},
			doi={10.1007/BF01244315},
		}

		\bib{LM95}{article}{
			title = {The Method of Layer Potentials for the Heat Equation in Time-Varying Domains},
			volume = {114},
			doi = {10.1090/memo/0545},
			number = {545},
			journal = {Mem. Amer. Math. Soc.},
			author = {Lewis, J.},
			author = {Murray, M.},
			date = {1995},
		}

		\bib{Mit01}{article}{
			title = {The Initial Dirichlet Boundary Value Problem for General Second Order Parabolic Systems in Nonsmooth Manifolds},
			volume = {26},
			number = {11-12},
			journal = {Commun. Partial Differ. Equ.},
			author = {Mitrea, M.},
			date = {2001},
			pages = {1975--2036},
		}

		\bib{Nys97}{article}{
			title = {The Dirichlet Problem for Second Order Parabolic Operators},
			volume = {46},
			doi = {10.1512/iumj.1997.46.1277},
			number = {1},
			journal = {Indiana Univ. Math. J.},
			author = {Nystr\"om, K.},
			date = {1997},
			pages = {183--246},
		}

		\bib{Nys06}{article}{
			title = {Boundary Value Problems and Duality between $L^p$ Dirichlet and Regularity Problems for Second Order Parabolic Systems in Non-Cylindrical Domains},
			volume = {57},
			number = {1},
			journal = {Collect. Math.},
			author = {Nystr\"om, K.},
			date = {2006},
			pages = {93--119},
		}

		\bib{Riv14}{article}{
			title = {Perturbation and Solvability of Initial $L^p$ Dirichlet Problems for Parabolic Equations over Non-Cylindrical Domains},
			volume = {66},
			doi = {10.4153/CJM-2013-028-9},
			number = {2},
			journal = {Can. J. Math.},
			author = {{Rivera-Noriega}, J.},
			date = {2014},
			pages = {429--452},
		}

		\bib{Sal81}{article}{
			title = {Some Properties of Nonnegative Solutions of Parabolic Differential Operators},
			author = {Salsa, S.},
			volume = {128},
			number = {1},
			pages = {193 -- 206},
			date = {1981},
			doi = {10.1007/BF01789473},
			journal = {Ann. Mat. Pura Appl. (4)},
		}

		\bib{Ste70}{book}{
			address = {Princeton, N.J.},
			title = {Singular Integrals and Differentiability Properties of Functions},
			publisher = {Princeton University Press},
			author = {Stein, E.},
			date = {1970}
		}

		\bib{Zie89}{book}{
			address = {New York; London},
			title = {Weakly Differentiable Functions},
			subtitle = {Sobolev Spaces and Functions of Bounded Variation},
			number = {120},
			series = {Grad. Texts in Math.},
			publisher = {Springer-Verlag},
			author = {Ziemer, W.},
			date = {1989},
		}

	\end{biblist}
\end{bibdiv}

\end{document}